\documentclass[12pt]{amsart}
\usepackage{amsmath,amssymb,amsthm,color}
\usepackage[numbers]{natbib}
\usepackage{url}

\usepackage{slashed}
\usepackage{mathtools,dsfont,amsfonts,bbm}

\newtheorem{lem}{Lemma}[section]
\newtheorem{prop}{Proposition}[section]
\newtheorem{cor}{Corollary}[section]
\newtheorem{thm}{Theorem}[section]
\newtheorem*{conj}{Conjecture}
\theoremstyle{remark}

\numberwithin{equation}{section}

\usepackage{esint}
\mathtoolsset{showonlyrefs}

\newcommand{\R}{\mathbb R}
\newcommand{\F}{\mathcal F}

\begin{document}
\title[On scattering-critical NLS]{On 2D Scattering for a Critical Nonlinear Schr\"odinger Flow}
\author[Vladimir Georgiev]{Vladimir Georgiev$^*$}
\thanks{$^*$Corresponding author.}
\address{Dipartimento di Matematica, Universit\`a di Pisa, Largo B. Pontecorvo 5, 56100 Pisa, Italy;\\ Faculty of Science and Engineering, Waseda University, 3-4-1 Okubo, Shinjuku-ku, Tokyo 169-8555, Japan;\\ Institute of Mathematics and Informatics, Bulgarian Academy of Sciences, Acad. Georgi Bonchev Str., Block 8, 1113 Sofia, Bulgaria}
\author{Tohru Ozawa}
\address{Department of Applied Physics, Waseda University, 3-4-1 Okubo, Shinjuku-ku, Tokyo 169-8555, Japan}
\date{}

\begin{abstract}
We study modified scattering for the two-dimensional defocusing nonlinear Schr\"odinger equation (NLS) with the gauge-invariant, scattering-critical nonlinearity $|u|u$. We prove that the remodulated interaction representation has a local $L^2$ limit for general data in $\Sigma=H^1\cap\F^{-1}H^1$. For radial data, we prove convergence in $L^p(\R^2)$ for every $2<p<\infty$.
\end{abstract}
\maketitle

\noindent\textbf{Keywords:} Modified scattering, nonlinear Schr\"odinger equation.\\
\textbf{Subject Class:} 35Q55, 35P25, 35B40.

\section{Statement of the problem and main result}

The scattering behavior of solutions to the nonlinear Schr\"odinger equation (NLS) is closely related to the existence of the limits
\begin{equation}\label{eqWOPER1}
\lim_{t\to\pm\infty}U(-t)u(t),
\end{equation}
where
\begin{equation}\label{eqNLS}
i\partial_tu+\frac12\Delta u=u|u|^{\alpha},\qquad u(0)=u_0\in H^1(\R^n),
\end{equation}
and \(U(t)=e^{it\Delta/2}\) denotes the free Schr\"odinger group. In the mass-supercritical, energy-subcritical regimes, standard functional settings for scattering and wave operators are discussed in \cite{C03}. In the mass-subcritical regime, a natural functional framework is provided by the Fourier-invariant weighted Sobolev space
\[ \Sigma=H^1(\R^n)\cap\F^{-1}H^1(\R^n). \]
When \(0<\alpha\le 2/n\), the nonlinear interaction is long-range, and the usual scattering profile \(U(-t)u(t)\) is not expected to converge without a suitable phase correction. This leads naturally to modified wave operators and modified scattering.

In this paper, we focus on the two-dimensional defocusing nonlinear Schr\"odinger equation with the critical quadratic nonlinearity, corresponding to \(\alpha=1\) and \(n=2\):
\begin{equation}\label{NLSmm}
i\partial_tu+\frac12\Delta u=u|u|, \qquad (t,x)\in\R\times\R^2.
\end{equation}
For general, not necessarily small, initial data, a principal difficulty is to prove the existence of a strong \(L^2\) limit of the modified profile as \(t\to\infty\). Although the \(L^2\) norm of the profile is controlled by mass conservation, this bound alone does not provide the compactness needed to establish strong convergence in \(L^2(\R^2)\). Nevertheless, in the radial setting, one may expect the modified profile to converge strongly in \(L^p(\R^2)\) for every \(2<p<\infty\), with a limiting state belonging to the corresponding \(L^p\) spaces. Thus, the natural large-data modified-scattering statement is formulated in \(L^p\), rather than through global strong convergence in \(L^2\). This point of view is motivated by the linear \(L^p\) wave-operator theory of Yajima \cite{Y99}. In the nonlinear setting, however, the effective potential depends both on the solution and on time, which introduces additional difficulties. For \(t\ge1\), we define the phase as a function of the full spatial variable \(x\in\R^2\) by
\begin{equation}\label{eq.phd}
(\Phi_t(u))(x):=\int_1^t \left|u\left(\tau,\frac{\tau}{t}x\right)\right|\,d\tau.
\end{equation}
We then define the phase-corrected solution \(w\) and the corresponding modified profile \(v\) by
\begin{equation}\label{signconv}
w(t):=e^{i\Phi_t(u)}u(t),\qquad v(t):=U(-t)w(t).
\end{equation}
This choice of phase and sign convention agrees with the results of \cite{GO25}. These considerations lead to the following conjectural nonlinear analogue of Yajima's \(L^p\) wave-operator theory.
\begin{conj}[Radial nonlinear Yajima-type conjecture] Let \(u_0\in\Sigma\) be radial, and let \(u\) be the corresponding global solution to \eqref{NLSmm}. Then there exists
\[ u_+\in L^2(\R^2)\cap\bigcap_{2<p<\infty}L^p(\R^2) \]
such that, for every \(2<p<\infty\),
\[ \|U(-t)e^{i\Phi_t(u)}u(t)-u_+\|_{L^p}\to0 \qquad\text{as }t\to\infty. \]
\end{conj}
The conjecture does not assert strong convergence of the modified profile in \(L^2(\R^2)\). Rather, it asserts the existence of an \(L^2\) limiting state \(u_+\) and strong convergence to this state in every \(L^p(\R^2)\) with \(2<p<\infty\).

\newpage

\subsection{Main results}
\begin{thm}\label{MT0}
Let \(u\) be a global solution to \eqref{NLSmm} with
\[ u_0\in \mathcal F^{-1}H^1(\R^2). \]
Then there exists a unique \(u_+\in L^2(\R^2)\) such that, for every \(R>0\),
\begin{equation}\label{mwoe60m}
\lim_{t\to\infty} \left\|U(-t)e^{i\Phi_t(u)}u(t)-u_+\right\|_{L^2(|x|<R)}=0,
\end{equation}
and, for every \(s>1\),
\begin{equation}
\lim_{t\to\infty} \left\|\langle x \rangle^{-s} \left( U(-t)e^{i\Phi_t(u)}u(t)-u_+\right)\right\|_{L^2}=0,
\end{equation}
where $\langle x \rangle = (1+|x|^2)^{1/2}.$
\end{thm}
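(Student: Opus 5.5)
The plan is to show that the modified profile $v(t)=U(-t)e^{i\Phi_t(u)}u(t)$ is Cauchy in the weighted norm $\|\langle x\rangle^{-s}\cdot\|_{L^2}$ for $s>1$. Local $L^2$ convergence on $\{|x|<R\}$ then follows at once, since $\langle x\rangle^{-s}\ge \langle R\rangle^{-s}$ there. The limit $u_+$ is first obtained in the weighted space $\langle x\rangle^{s}L^2$. By mass conservation, $\|v(t)\|_{L^2}=\|u_0\|_{L^2}$, so $v(t)$ is bounded in $L^2$ and has weak limit points in $L^2$. Weak convergence in $L^2$ together with strong convergence in the weighted space identifies the two limits, which gives $u_+\in L^2$ with $\|u_+\|_{L^2}\le\|u_0\|_{L^2}$. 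Uniqueness is immediate, because a local $L^2$ limit on every ball is unique.

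To obtain the Cauchy property, I would differentiate $v$ in time. Write $\Phi_t(u)(x)=\int_1^t |u(\tau,\tfrac{\tau}{t}x)|\,d\tau$ and factor $U(t)=M(t)D(t)\mathcal F M(t)$, where $M(t)=e^{i|x|^2/2t}$ and $D(t)$ is the $L^2$ dilation. After this conjugation the phase becomes, in the profile variables, essentially a $t$-integral of $|\tilde u(\tau,\xi)|/\tau$. Here $\tilde u(\tau)$ is the Fourier-side profile $\mathcal F U(-\tau)u(\tau)$, up to errors involving $M(\tau)-1$.

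The pointwise derivative $\partial_t\Phi_t(u)(x)=|u(t,x)|-\int_1^t \frac{\tau}{t^2}x\cdot\nabla|u|(\tau,\tfrac{\tau}{t}x)\,d\tau$ is designed so that, after applying $U(-t)$, the leading resonant term $u|u|$ cancels against $|u|u$ from the equation. What remains of $i\partial_t v$ is a sum of commutator-type terms. The first is $U(-t)\big[(\partial_t\Phi)-|u|\big]w$. The second consists of $U(-t)\big(\nabla\Phi\cdot\nabla w+\tfrac12|\nabla\Phi|^2w+\tfrac i2\Delta\Phi\, w\big)$, which arises from commuting $\Delta$ with $e^{i\Phi}$.

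The weight $\langle x\rangle^{-s}$ is used through the dual estimate. For a source term $F(\tau)$,
\[
\Big\|\langle x\rangle^{-s}\int_{t_1}^{t_2}U(-\tau)F(\tau)\,d\tau\Big\|_{L^2}
\]
can be bounded using the local smoothing / local decay estimate $\|\langle x\rangle^{-s}U(-\tau)f\|_{L^2}\lesssim \langle\tau\rangle^{-1}\|\langle x\rangle^{s}f\|_{L^2}$. In 2D this is valid for $s>1$, which is where the hypothesis $s>1$ enters. Alternatively, one can write $U(-\tau)=M(-\tau)\ldots$ and use $x=U(-\tau)(x+i\tau\nabla)U(\tau)$, that is, the vector field $J(\tau)=x+i\tau\nabla$. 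Since $u_0\in\mathcal F^{-1}H^1$, I expect the a priori bound $\|J(t)u(t)\|_{L^2}\lesssim \langle t\rangle^{\epsilon}$ or better; for the defocusing equation the pseudo-conformal law should give $\|J(t)u\|_{L^2}\lesssim t^{1/2}$ together with decay $\|u(t)\|_{L^3}\lesssim t^{-1/3}$. These yield the time decay needed to make each remainder term integrable in $\tau$ after the weight is applied. The term involving $\nabla w$ is handled by trading $\nabla$ for $J(\tau)/\tau$ modulo $x/\tau$, which the weight absorbs.

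The main obstacle is the phase. $\Phi_t(u)$ involves $|u|$, which is only Lipschitz, and there is no $H^1$ assumption on $u_0$, so $\nabla\Phi$ and $\Delta\Phi$ cannot be estimated in any classical sense. I would first try to avoid $\Delta\Phi$ entirely. The idea is to pass to the Fourier-side profile, where the phase correction acts by multiplication only, and to estimate the error terms $M(\tau)-1$ by $|x|^{2}/\tau$ weights, using $s>1$ only up to a fractional power, since $|M-1|\lesssim(|x|^2/\tau)^{\theta}$. On the Fourier side, the derivative of the profile reduces to terms like $|\tilde u|\tilde u$ composed with $(M-1)$ errors. These are bounded by $\tau^{-1-\theta}\|\langle x\rangle^{2\theta}\cdots\|$ with $2\theta<s$, and they are integrable. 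If this route fails, the fallback is to mollify $|u|$ to $\sqrt{|u|^2+\delta^2}$, prove the uniform Cauchy estimate for $\delta>0$, and then let $\delta\to0$ using $||a|-|b||\le|a-b|$.
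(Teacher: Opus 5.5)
Your proposal has a genuine gap at the step that carries the whole argument: the claim that every remainder in $\partial_\tau v$ becomes integrable in $\tau$ once the weight $\langle x\rangle^{-s}$ is inserted. For large data the only available bound is $\|J(t)u(t)\|_2\lesssim t^{1/2}$, from the pseudoconformal law. The bound $\langle t\rangle^{\epsilon}$ you hoped for is a small-data phenomenon. With $t^{1/2}$, the remainders come out as exactly $\tau^{-1}$.

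To see this, set $\tilde u(t,\xi)=it\,e^{-it|\xi|^2/2}u(t,t\xi)$, so that $u=M(t)D(t)\tilde u$. Then $\Phi_t(u)(t\xi)=\Theta(t,\xi):=\int_1^t\tau^{-1}|\tilde u(\tau,\xi)|\,d\tau$ exactly, and $v(t)=M(t)^{-1}\F^{-1}\hat w(t)$ with $\hat w=e^{i\Theta}\tilde u$. The phase removes the nonlinearity with no $M-1$ error:
\[
 i\partial_t\hat w=-\frac{1}{2t^2}\,e^{i\Theta}\Delta_\xi\tilde u .
\]
The outer $M(t)^{-1}$ is indeed harmless in $\langle x\rangle^{-s}L^2$, so your plan needs $\int^\infty\|\partial_t\hat w\|_{H^{-s}}\,dt<\infty$.

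Pairing with $\psi\in H^s$ and integrating by parts produces the term $t^{-2}\int e^{i\Theta}\nabla\Theta\cdot\nabla\tilde u\,\bar\psi\,d\xi$. Since $\|\nabla_\xi\tilde u(t)\|_2=\|J(t)u(t)\|_2\lesssim t^{1/2}$, also $\|\nabla\Theta(t)\|_2\lesssim t^{1/2}$. The best bound is therefore $t^{-2}\cdot t^{1/2}\cdot t^{1/2}\|\psi\|_\infty\lesssim t^{-1}\|\psi\|_{H^s}$, which diverges logarithmically.

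Your estimate $\tau^{-1-\theta}\|\langle x\rangle^{2\theta}\cdots\|$ has the same defect. The weight there sits on the profile inside the nonlinearity, not on the output, so $\langle x\rangle^{-s}$ cannot absorb it. Moreover $\|\langle x\rangle^{2\theta}U(-\tau)u(\tau)\|_2\lesssim\tau^{\theta}$, which cancels the gain exactly.

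The local-decay route is worse. The factor $\langle\tau\rangle^{-1}$ is already non-integrable, and the source lives at $|x|\sim\tau$, so $\|\langle x\rangle^{s}F(\tau)\|_2$ grows like $\tau^{s}$. Mollifying $|u|$ addresses regularity, not this growth. The logarithmic loss is the criticality of $|u|u$ in two dimensions, and nothing in the proposal beats it.

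The paper does not attempt a direct Cauchy estimate. It imports Theorem \ref{th5} (\cite[Theorem 1.5]{GO25}), which gives strong convergence $v(t)\to u_+$ in $\F^{-1}\dot B^{-1}_{2,\infty}$. By the spatial dyadic characterization, this means $\sup_j 2^{-j}\|v(t)-u_+\|_{L^2(A_j)}\to0$. Summing over annuli then gives both conclusions at once: a bound $O(R)$ times this supremum on $\{|x|<R\}$, and a bound $O(R^{1-s})$ times it for the weighted norm on $\{|x|>R\}$. The hypothesis $s>1$ enters only through convergence of $\sum_j 2^{2j(1-s)}$, not through a local-decay estimate. That imported convergence in a scale-critical space is the missing ingredient. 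Your passage from weighted to local convergence, and your uniqueness argument, are fine once it is available.
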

\begin{thm}\label{MT1} Let \(u\) be a radial global solution to \eqref{NLSmm} with
\[ u_0\in H^1_{\mathrm{rad}}(\R^2)\cap \mathcal F^{-1}H^1_{\mathrm{rad}}(\R^2). \]
Then there exists a unique
\[ u_+\in L^2(\R^2)\cap\bigcap_{2<p<\infty}L^p(\R^2) \]
such that, for every \(2<p<\infty\),
\begin{equation}\label{mwoe60}
\lim_{t\to\infty} \left\|U(-t)e^{i\Phi_t(u)}u(t)-u_+\right\|_{L^p}=0.
\end{equation}
\end{thm}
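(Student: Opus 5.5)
The plan is to deduce Theorem \ref{MT1} from Theorem \ref{MT0} together with uniform radial bounds. Theorem \ref{MT0} gives a limit \(u_+\in L^2\) of \(v(t)=U(-t)e^{i\Phi_t(u)}u(t)\) in \(L^2_{loc}\). To upgrade this to \(L^p(\R^2)\) convergence for \(2<p<\infty\), I would establish two facts. The first is a uniform bound \(\sup_{t\ge1}\|v(t)\|_{\dot H^{\sigma}}<\infty\) for some \(\sigma\in(0,1)\); alternatively it suffices to bound \(\|\nabla v(t)\|_{L^2}\) by a slowly growing factor \(\log t\) or \(t^{\epsilon}\). The second is uniform decay of the tails of \(v(t)\) at spatial infinity in \(L^p\), \(p>2\), which I would obtain from radiality.

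For the first step I would write \(v(t)=U(-t)w(t)\) and use the identity \(\nabla U(-t)=U(-t)(\nabla+\tfrac{ix}{t})\)-type relations. In practice this means working with \(J(t)=x+it\nabla\), the conjugated operator \(U(t)xU(-t)\). The standard pseudo-conformal/energy argument gives \(\|J(t)u(t)\|_{L^2}\lesssim 1\) for the defocusing \(|u|u\) in 2D, since the pseudoconformal energy is bounded for \(\alpha=1=2/n\) with a favourable sign. We also have \(\|\nabla u(t)\|_{L^2}\lesssim1\) from energy conservation, because \(u_0\in H^1\). For \(v\), \(\nabla v=U(-t)\nabla(e^{i\Phi_t}u)\), which leaves us needing \(\|\nabla\Phi_t\|\), i.e. \(\int_1^t (\tau/t)|\nabla u(\tau,\tau x/t)|d\tau\). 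In \(L^\infty\)-type norms this picks up \(\log t\) at worst. The gain comes from \(xv\): \(x v = U(-t)J(t)w\), and \(J(t)(e^{i\Phi_t}u)=e^{i\Phi_t}(J u - t u\nabla\Phi_t)\). Here radiality plus the decay estimate \(\|u(\tau)\|_{L^\infty}\lesssim \tau^{-1}\) (from \(Ju\in L^2\), \(\nabla u\in L^2\) via a Gagliardo–Nirenberg inequality with \(J\)) should give \(\sup_t\|\langle x\rangle^{\theta}v(t)\|_{L^2}<\infty\) for some \(\theta>0\), possibly with a logarithmic loss that interpolation absorbs.

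Given these bounds, the argument runs as follows. Split \(v(t)-u_+=\chi_R(v(t)-u_+)+(1-\chi_R)(v(t)-u_+)\). On \(|x|<R\), interpolate the \(L^2(|x|<R)\) convergence from Theorem \ref{MT0} with a uniform \(H^{\sigma}\) (or log-weighted \(H^1\)) bound via Gagliardo–Nirenberg, \(\|f\|_{L^p}\lesssim\|f\|_{L^2}^{1-\gamma}\|f\|_{\dot H^{\sigma}}^{\gamma}\) with \(\gamma=\tfrac{2}{\sigma}(\tfrac12-\tfrac1p)\). This gives local \(L^p\) convergence whenever \(\sigma>1-2/p\), so any \(p<\infty\) needs \(\sigma\) close to \(1\). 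The outer region is where radiality enters. Strauss's radial inequality \(|f(r)|\lesssim r^{-1/2}\|f\|_{L^2}^{1/2}\|\nabla f\|_{L^2}^{1/2}\) gives
\[\|f\|_{L^p(|x|>R)}^p\le \sup_{|x|>R}|f|^{p-2}\|f\|_{L^2}^2\lesssim R^{-(p-2)/2}\|\nabla f\|_{L^2}^{(p-2)/2},\]
which tends to \(0\) uniformly in \(t\) as \(R\to\infty\) provided \(\|\nabla v(t)\|_{L^2}\) is bounded. With only logarithmic growth we can take \(R=R(t)\) growing polynomially, as long as the interior estimate survives. Fatou's lemma puts \(u_+\) in every \(L^p\), and uniqueness follows from Theorem \ref{MT0}.

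The main obstacle is the uniform \(\dot H^1\) control of \(v(t)\), namely bounding \(\|t u\nabla\Phi_t\|\) and \(\|\nabla\Phi_t\, u\|\). Here \(\nabla\Phi_t(x)=\int_1^t(\tau/t)\nabla|u|(\tau,\tau x/t)\,d\tau\), and bounding it in \(L^2\) after rescaling gives \(\int_1^t \tau^{-1}\cdot\tau\cdot(\ldots)\), which is borderline. My first attempt would use the radial Strauss bound and the \(L^\infty\) decay \(|u(\tau)|\lesssim\tau^{-1}\) to get \(\|\nabla v(t)\|_{L^2}\lesssim\log t\). I would then combine a time-dependent cutoff \(R(t)=(\log t)^{2}\) with a rate in Theorem \ref{MT0}, if one can be extracted, or else interpolate between \(\dot H^1\) with logarithmic growth and the \(L^2\) weighted convergence \(\langle x\rangle^{-s}\) of Theorem \ref{MT0}. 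The second route is robust since logarithmic losses are beaten by any power coming from the weight.
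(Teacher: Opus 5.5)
\textbf{There is a genuine gap}, in the step you yourself call the main obstacle: the a priori bound $\sup_{t\ge1}\|v(t)\|_{\dot H^\sigma}<\infty$ (or $\|\nabla v(t)\|_{L^2}\lesssim\log t$). Neither of the two inputs you propose for it is available.

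First, $\alpha=1=2/n$ is not the pseudoconformally invariant exponent $4/n=2$. By \eqref{pcid}, $\frac{d}{dt}\mathcal E_u^{\rm conf}(t)=\frac t3\int|u|^3\,dx\ge0$, so the sign is unfavourable. The standard argument only yields $\mathcal E_u^{\rm conf}(t)\lesssim t$, i.e.\ $\|J(t)u(t)\|_2\lesssim t^{1/2}$ (Lemma~\ref{uniform1}), not $\|J(t)u(t)\|_2\lesssim1$.

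Second, in two dimensions $L^2$ bounds on $u$, $\nabla u$ and $J(t)u$ do not give $L^\infty$ control. The $J$-Gagliardo--Nirenberg inequality $\|u\|_{L^p}\lesssim t^{-(1-2/p)}\|u\|_2^{2/p}\|J(t)u\|_2^{1-2/p}$ holds only for $p<\infty$. With the true growth of $\|J(t)u\|_2$ it gives merely $\|u(t)\|_{L^p}\lesssim t^{-(1-2/p)/2}$, and the radial Strauss bound is singular at the origin. A decay $\|u(\tau)\|_{L^\infty}\lesssim\tau^{-1}$ is of the strength of the asymptotics you are trying to prove and cannot be assumed for large data.

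Without it you cannot control $\|u\,\nabla\Phi_t(u)\|_{L^2}$. All that is available is $\|\nabla\Phi_t(u)\|_{L^2}\le\int_1^t\tau^{-1}\|J(\tau)u(\tau)\|_2\,d\tau\lesssim t^{1/2}$, together with $u(t)\in L^\mu$ for finite $\mu$. Hence your argument bounds neither $\|\nabla v(t)\|_{L^2}=\|\nabla(e^{i\Phi_t(u)}u(t))\|_{L^2}$ nor $\|x\,v(t)\|_{L^2}=\|J(t)u-t\,u\nabla\Phi_t(u)\|_{L^2}$. The fallback does not close either. Theorem~\ref{MT0} provides no rate in $t$, so growth in $t$ of the $\dot H^1$ norm cannot be compensated by spatial weights or by a cutoff radius $R(t)$.

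\textbf{The missing idea} is that one should not try to put $v(t)$ in $H^1$ at all. The paper writes $v=(1-\Delta)^{-1}v-(1-\Delta)^{-1}\nabla\cdot U(-t)\nabla w$ and splits $v=f_1+f_2+f_3$. The dangerous piece $f_1=-(1-\Delta)^{-1}\nabla\cdot U(-t)[(\nabla e^{i\Phi_t(u)})u]$ is measured in $W^{1,4}$ rather than $H^1$. H\"older puts $(\nabla\Phi_t(u))u$ in $L^{4/3}$ with norm $\lesssim t^{1/2}\|u(t)\|_{L^4}$, and the dispersive estimate $\|U(-t)F\|_{L^4}\lesssim t^{-1/2}\|F\|_{L^{4/3}}$ cancels this growth exactly. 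Meanwhile $f_2$ and $f_3$ are uniformly bounded in $H^1$ and $H^2$ (Lemma~\ref{lemf}).

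After checking that each $f_j$ is radial (for $f_1$ this needs the degree-one spherical-harmonic argument), the Strauss--Lions bound $|f(x)|\lesssim|x|^{-1/q}\|f\|_{W^{1,q}_{\rm rad}}$ gives $\sup_{|x|>R}|v(t,x)|\lesssim R^{-1/4}$ uniformly in $t$. Sobolev embedding gives uniform $L^\rho$ bounds on bounded sets.

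With these bounds in place of your $\dot H^\sigma$ bound, your overall scheme does go through: uniform tails from $\|f\|_{L^p(|x|>R)}^p\le\sup_{|x|>R}|f|^{p-2}\|f\|_2^2$, plus H\"older interpolation of the local $L^2$ convergence with local $L^\rho$ bounds. The paper instead interpolates weighted $L^\rho$ bounds against $\||x|^{-\theta}(v(t)-u_+)\|_{L^2}\to0$, derived from the $\F^{-1}\dot B^{-1}_{2,\infty}$ convergence. In either version, the decisive step is the decomposition combined with the dispersive estimate.
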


We briefly review some results on modified scattering for the NLS in two dimensions. Most of these works concern small-data modified scattering and the construction of modified wave operators, namely solutions with prescribed asymptotic data at \( \pm\infty \). The study of modified wave operators for NLS began in 1991 with the work \cite{O91}, which considered the one-dimensional critical NLS with small initial data.
The cases \(n=1,2,3\) with \(\alpha=2/n\), under a small-data assumption, were subsequently investigated in \cite{GO93}. Small-data asymptotics in \(\Sigma\) were established in \cite{HN98AJM}, while questions concerning the domain and range of modified wave operators under weaker regularity assumptions were addressed in \cite{HN06}.
The existence of weak \(H^1\) limits for modified wave operators was studied in \cite{H95} for Hartree-type NLS when \(n\geq 3\), and for NLS in the two-dimensional critical case \(n=2\) with quadratic nonlinearity \((\alpha=1)\). The results in \cite{H95} show the existence of a weak \(H^1\) limit of \begin{equation} M(t)U(-t)e^{i\Phi_t(u)}u(t), \end{equation} where \(M(t)\) denotes multiplication by \(e^{i|x|^2/(2t)}\).
More recently, \cite{VH25} established small-data modified scattering results for \(n=2,3\) in the presence of combined local and nonlocal nonlinearities. In our previous work \cite{GO25}, we studied modified wave operators with large initial data at \(t=0\) for \(n=1,2\) and \(\alpha=1\). Adapting the method of \cite{H95}, we proved the existence of a strong limit of \begin{equation} U(-t)e^{i\Phi_t(u)}u(t) \end{equation} in the space \(\mathcal F^{-1}(\dot B^{-1}_{2,\infty})\).

\section{Proof of the main results}
\subsection{Proof of Theorem \ref{MT0}}
We recall \cite[Theorem 1.5]{GO25}.
\begin{thm}\label{th5}
Let $n=2$. For any $u_0\in\F^{-1}(H^1)$ there exists a unique $u_+\in L^2$ such that
\begin{equation}\label{wp1}
 \|U(-t)e^{i\Phi_t(u)}u(t)-u_+\|_{\F^{-1}(\dot B^{-1}_{2,\infty})}\to0.
\end{equation}
\end{thm}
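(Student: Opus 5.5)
The statement to be proved is Theorem \ref{th5}, which is recalled from \cite[Theorem 1.5]{GO25}. We sketch how it can be obtained directly, adapting the approach of \cite{H95}. Write $v(t)=U(-t)w(t)$ with $w=e^{i\Phi_t(u)}u$. The plan is to show that $\hat v(t)$ is Cauchy in the norm $\|\,|\xi|^{-1}\cdot\|$ restricted to dyadic shells, uniformly in the dyadic scale, i.e.\ in $\dot B^{-1}_{2,\infty}$ on the Fourier side. We then identify the limit with an $L^2$ function using the uniform mass bound.

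\textbf{Step 1: uniform bounds.} Mass conservation gives $\|v(t)\|_{L^2}=\|u_0\|_{L^2}$. The pseudo-conformal identity (available since $u_0\in\F^{-1}H^1$, for the defocusing equation with $\alpha=1=2/n$) gives control of $\|(x+it\nabla)u(t)\|_{L^2}$. Writing $J(t)=x+it\nabla=U(t)xU(-t)$, we obtain the bound $\|xv(t)\|_{L^2}\lesssim \|J(t)u\|_{L^2}+t\|u\,\nabla\Phi_t(u)\|_{L^2}$. The second term is the dangerous one. We therefore do not aim at uniform bounds on $xv$; we use only $L^2$ boundedness of $v$ plus the time-derivative structure.

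\textbf{Step 2: the equation for $v$.} Differentiating, $i\partial_t v=U(-t)\big[(i\partial_t+\tfrac12\Delta)w\big]$. The phase was chosen so that the leading term $|u|u$ cancels against $-\partial_t\Phi_t\,w$ at the level of the asymptotic profile $u\approx (it)^{-1}e^{i|x|^2/2t}\hat v(x/t)$. The remainder consists of terms of the type $U(-t)[\nabla\Phi_t\cdot\nabla w]$, $U(-t)[(\Delta\Phi_t) w]$, $U(-t)[|\nabla\Phi_t|^2w]$, and the error $|u(t,x)|-|u(\tau,\tau x/t)|$ coming from $\partial_t\Phi_t$. We then pair $\partial_t\hat v$ with a Littlewood--Paley cutoff $\chi(\xi/N)$ weighted by $N^{-1}$. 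The factor $|\xi|^{-1}$ on the Fourier side corresponds to one spatial antiderivative. This lets us integrate by parts to remove the gradient falling on $w$, converting it into a $J(t)/t$ factor. That factor is integrable in time because of the pseudo-conformal decay $\|u(t)\|_{L^\infty}\lesssim t^{-1}\|J u\|^{1/2}\|u\|^{1/2}$ (Gagliardo--Nirenberg in $2$D, up to a log which the Besov weakening absorbs). The goal is an estimate $\sup_N N^{-1}\|P_N\partial_t v(t)\|_{L^2}\lesssim t^{-1-\delta}$, or at least integrability in $t$, which makes $v(t)$ Cauchy in $\F^{-1}\dot B^{-1}_{2,\infty}$.

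\textbf{Step 3: identifying the limit.} By Step 1, $v(t)$ is bounded in $L^2$, so along a subsequence it converges weakly to some $u_+\in L^2$. Weak $L^2$ convergence implies distributional convergence. Since the Besov limit is also a distributional limit, the two coincide, which gives \eqref{wp1}. Uniqueness follows because $\F^{-1}\dot B^{-1}_{2,\infty}$ limits are unique modulo polynomials, and $L^2$ excludes nonzero polynomials.

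\textbf{Main obstacle.} We expect Step 2 to be the hardest: controlling $\nabla\Phi_t(u)=\int_1^t(\tau/t)\nabla|u|(\tau,\tau x/t)\,d\tau$ for large data with only $\Sigma$-type (pseudo-conformal) information. The natural first attempt is to write $\nabla|u|$ via $J(\tau)$, as $\tau^{-1}|J u|$ modulo phase. Its $L^2$ norm after rescaling then gives a $\log t$ growth, which the single derivative loss in $\dot B^{-1}_{2,\infty}$ should compensate, exactly as in \cite{H95}.
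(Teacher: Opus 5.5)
There is a genuine gap. Note first that the paper gives no proof of this statement: it is quoted from \cite[Theorem 1.5]{GO25}, so your sketch has to stand on its own.

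\textbf{The norm is misidentified.} In this paper $\F^{-1}\dot B^{-1}_{2,\infty}$ is the Fourier image of $\dot B^{-1}_{2,\infty}$. By Appendix A its norm is $\sup_j 2^{-j}\|g\|_{L^2(A_j)}$, where the $A_j$ are dyadic annuli in the \emph{physical} variable $x$. So it is a weight $|x|^{-1}$ on $v(t)-u_+$, and this is exactly how the statement yields local $L^2$ convergence in Theorem \ref{MT0}. You instead aim at $\sup_N N^{-1}\|P_N(v(t)-u_+)\|_2$ with frequency projections, which is the ordinary negative-smoothness space. That is a different statement, and it gives no local $L^2$ control: $e^{ik\cdot x}\chi(x)$ with $|k|\to\infty$ tends to $0$ there but not in $L^2(|x|<R)$.

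Consequently the mechanism of Step 2 acts on the wrong quantity. You read $|\xi|^{-1}$ as an antiderivative and integrate $\nabla$ off $w$. But $xU(-t)=U(-t)J(t)$, so a weight in $x$ on $v$ corresponds to $J(t)$ acting on $w$, not to $\nabla$.

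The right reduction goes through the pseudoconformal transform. The norm is invariant under multiplication by $M(t)=e^{i|x|^2/(2t)}$, and with $s=-1/t$,
\[
 \F[M(t)v(t)](\xi)=e^{i\tilde\Phi(s,\xi)}\tilde u(s,\xi)=:\tilde w(s,\xi),\qquad \tilde u(s,\xi)=it\,e^{-it|\xi|^2/2}u(t,t\xi).
\]
Here $i\partial_s\tilde u+\frac12\Delta\tilde u=|s|^{-1}|\tilde u|\tilde u$ and $\tilde\Phi(s,\xi)=\Phi_t(t\xi)=\int_{-1}^s|\tilde u(\sigma,\xi)|\,d\sigma/|\sigma|$. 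The cancellation is exact: $\partial_s\tilde w=\frac i2 e^{i\tilde\Phi}\Delta\tilde u$. The theorem is therefore convergence of $\tilde w(s)$ in the ordinary $\dot B^{-1}_{2,\infty}$ as $s\to0^-$, and only in this frame is an antiderivative argument meaningful. Relatedly, the remainder in Step 2 is not an error of the form $|u(t,x)|-|u(\tau,\tau x/t)|$; one has exactly $\partial_t\Phi_t=|u(t)|-\frac{x}{t}\cdot\nabla\Phi_t$.

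\textbf{The estimates you invoke are not available for large data, so the time-integrability is unsupported.}
\begin{itemize}
\item The pseudoconformal identity (Lemma \ref{uniform1}) gives only $\|J(t)u(t)\|_2\lesssim t^{1/2}$, equivalently $\|\nabla\tilde u(s)\|_2\lesssim|s|^{-1/2}$. Hence $\|\nabla\Phi_t\|_2\lesssim t^{1/2}$, as in the proof of Lemma \ref{lemf}. A $\log t$ bound would need $\sup_t\|J(t)u(t)\|_2<\infty$, which you do not have.
\item Your decay bound $\|u(t)\|_\infty\lesssim t^{-1}\|Ju\|^{1/2}\|u\|^{1/2}$ is a one-dimensional Gagliardo--Nirenberg inequality. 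In two dimensions $H^1\not\hookrightarrow L^\infty$, and you only get $\|u(t)\|_p\lesssim t^{-(1-2/p)}\|u\|_2^{2/p}\|J(t)u\|_2^{1-2/p}\lesssim t^{-(1-2/p)/2}$ for $p<\infty$.
\item Even in the correct frame, the crude bound is borderline. Using the divergence form for one piece and $L^1(\R^2)\hookrightarrow\dot B^{-1}_{2,\infty}$ for the other,
\[
 \|e^{i\tilde\Phi}\Delta\tilde u\|_{\dot B^{-1}_{2,\infty}}\lesssim\|\nabla\tilde u\|_2+\|\nabla\tilde\Phi\|_2\|\nabla\tilde u\|_2\lesssim|s|^{-1/2}+|s|^{-1},
\]
which is logarithmically non-integrable at $s=0$.
\end{itemize}
So the heart of the argument, that $\tilde w(s)$ is Cauchy, needs an idea your outline does not supply. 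Step 2 states the desired estimate as a goal rather than proving it. Step 3 (identifying the limit with a weak $L^2$ limit, and uniqueness) is fine once the Cauchy property in the correct norm is established.
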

Let
\[
 g_t:=U(-t)e^{i\Phi_t(u)}u(t)-u_+,
 \qquad A_j:=\{2^{j-1}<|x|\le2^j\}.
\]
The spatial dyadic characterization from Appendix A gives
\begin{equation}\label{annconv}
 M_t:=\sup_{j\in\mathbb Z}2^{-j}\|g_t\|_{L^2(A_j)}\to0.
\end{equation}
Fix $R>0$ and choose $j_0$ so that $2^{j_0-1}<R\le2^{j_0}$. Then
\begin{equation}\label{both1}
    \begin{aligned}
        \|g_t\|_{L^2(|x|<R)}^2
 &\le\sum_{j=-\infty}^{j_0}\|g_t\|_{L^2(A_j)}^2\\
 &\le M_t^2\sum_{j=-\infty}^{j_0}2^{2j}
 =\frac43 2^{2j_0}M_t^2\lesssim R^2M_t^2.
    \end{aligned}
\end{equation}

Moreover,
\begin{equation}\label{both2}
\begin{aligned}
     &\|\langle x \rangle^{-s}g_t\|_{L^2(|x|>R)}^2  \leq \sum_{j=j_0-1}^\infty \int_{A_j} \langle x \rangle^{-2s} |g_t(x)|^2 dx\\
     &\leq \sum_{j=j_0-1}^\infty 2^{-2js}\|g_t\|_{L^2(A_j)}^2 \\
    & \leq \left( \sup_{j \in \mathbb Z} 2^{-j} \|g_t\|_{L^2(A_j)} \right)^2 \left( \sum_{j=j_0-1}^\infty 2^{2j-2js} \right) \lesssim R^{2(1-s)} M_t^2.
\end{aligned}
\end{equation}

Combining \eqref{both1} and \eqref{both2}, we obtain the desired weighted convergence.

It remains only to note uniqueness in the class \(L^2(\R^2)\). Suppose that
\(u_+,\widetilde u_+\in L^2(\R^2)\) both satisfy the conclusion of
Theorem \ref{MT0}. Then, for every \(R>0\),
\begin{equation}
    \begin{aligned}
        \|u_+-\widetilde u_+\|_{L^2(|x|<R)}&\\
 \le
 \|U(-t)e^{i\Phi_t(u)}u(t)-u_+\|_{L^2(|x|<R)}&
 +
 \|U(-t)e^{i\Phi_t(u)}u(t)-\widetilde u_+\|_{L^2(|x|<R)}.
    \end{aligned}
\end{equation}
Letting \(t\to\infty\), we obtain
\[
 \|u_+-\widetilde u_+\|_{L^2(|x|<R)}=0
\]
for every \(R>0\). Hence \(u_+=\widetilde u_+\) a.e. on \(\R^2\).
This completes the proof.

\subsection{Preliminary estimates for Theorem \ref{MT1}}
From \eqref{signconv}  and the operator identity $(1-\Delta)^{-1}(1-\Delta)=I$,
\begin{equation}\label{decomp}
\begin{aligned}
 v(t)&=f_1(t)+f_2(t)+f_3(t),\\
 f_1(t)&=-(1-\Delta)^{-1}\nabla\cdot U(-t)[(\nabla e^{i\Phi_t(u)})u(t)],\\
 f_2(t)&=-(1-\Delta)^{-1}\nabla\cdot U(-t)[e^{i\Phi_t(u)}\nabla u(t)],\\
 f_3(t)&=(1-\Delta)^{-1}U(-t)[e^{i\Phi_t(u)}u(t)].
\end{aligned}
\end{equation}

\begin{lem}\label{lemf}
For $4\le q<\infty$, $t>1$
\[
 f_1(t)\in W^{1,q},\quad f_2(t)\in H^1,\quad f_3(t)\in W^{1,q}\cap H^2,
\]
and,
\begin{equation}\label{estf1f3}
\begin{aligned}
 &  \|f_1(t)\|_{W^{1,q}}\lesssim t^{-1/2+2/q},\\
& \|f_2(t)\|_{H^1}\lesssim1,\\
&  \|f_3(t)\|_{W^{1,q}}+\|f_3(t)\|_{H^2}\lesssim1.
\end{aligned}
\end{equation}
\end{lem}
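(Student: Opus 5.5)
The plan is to estimate each of the three pieces separately, using only conservation laws and the standard a priori bounds available for data in $\Sigma$. Mass and energy conservation give $\|u(t)\|_{L^2}\lesssim1$ and $\|\nabla u(t)\|_{L^2}\lesssim1$ uniformly in $t$. We also use the pseudo-conformal (virial) identity for the defocusing equation, which controls $\|(x+it\nabla)u(t)\|_{L^2}\lesssim1$. Combining this with Gagliardo--Nirenberg applied to $M(-t)u(t)$, where $M(t)$ denotes multiplication by $e^{i|x|^2/(2t)}$, yields the decay
\[
\|u(t)\|_{L^r}\lesssim t^{-1+2/r},\qquad 2\le r<\infty .
\]
Since $(1-\Delta)^{-1}\nabla\cdot$ and $(1-\Delta)^{-1}$ are bounded operators, and $U(-t)$ is unitary on $H^s$, the pieces $f_2$ and $f_3$ are straightforward. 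For $f_2$, the operator $(1-\Delta)^{-1}\nabla\cdot$ maps $L^2\to H^1$, so $\|f_2\|_{H^1}\lesssim\|\nabla u\|_{L^2}\lesssim1$. For $f_3$, the operator $(1-\Delta)^{-1}$ maps $L^2\to H^2$, so $\|f_3\|_{H^2}\lesssim\|u\|_{L^2}$. The $W^{1,q}$ bound on $f_3$ then follows from the Sobolev embedding $H^2\subset W^{1,q}$ in two dimensions, valid for every $q<\infty$.

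The piece $f_1$ carries the decay and is the main step. First, $\nabla e^{i\Phi_t(u)}=i e^{i\Phi_t}\nabla\Phi_t$, where
\[
\nabla\Phi_t(x)=\int_1^t\frac{\tau}{t}\,(\nabla|u|)\Big(\tau,\frac{\tau}{t}x\Big)d\tau .
\]
We bound this in $L^2$ using $|\nabla|u||\le|\nabla u|$ and the dilation rule $\|h(\tfrac{\tau}{t}\cdot)\|_{L^2}=\tfrac{t}{\tau}\|h\|_{L^2}$. This gives
\[
\|\nabla\Phi_t\|_{L^2}\le\int_1^t\|\nabla u(\tau)\|_{L^2}\,d\tau\lesssim t .
\]
That bound is too crude: it is linear in $t$, while the claim requires decay. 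The plan is therefore to avoid the $L^2$ norm of $\nabla\Phi_t$ and instead put $\nabla\Phi_t$ into $L^{r}$ with large $r$, using the decay of $\nabla u$ in $L^r$ that comes from the pseudo-conformal bound. The weighted control gives
\[
\|\nabla u(\tau)\|_{L^r}\lesssim \tau^{-1+2/r}\|(x+i\tau\nabla)\nabla u\|^{\theta}\cdots,
\]
which however requires second-order information we do not have. The alternative is to use $\|(x+it\nabla)u\|_{L^2}$ to trade $\nabla u$ for $x u/t$. I expect this to be the main obstacle: obtaining a bound of the form $\|(\nabla e^{i\Phi_t})u(t)\|_{L^{p}}\lesssim t^{-1/2+2/q}$ for a suitable $p<2$.

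Once such a bound holds, the conclusion is linear. We apply the dispersive estimate $U(-t):L^{p}\to L^{p'}$ together with $(1-\Delta)^{-1}\nabla\cdot:L^{p'}\to W^{1,p'}$ at $p'=q$. This route needs $p=q'$ and picks up an extra factor $t^{-(1-2/q)}$ from dispersion. Hence it suffices to show
\[
\|(\nabla\Phi_t)\,u(t)\|_{L^{q'}}\lesssim t^{1/2}.
\]
By H\"older with $1/q'=1/2+1/r$, this reduces to estimating $\|\nabla\Phi_t\|_{L^2}\|u(t)\|_{L^r}$ with $r=2q/(q-2)$. This gives $t\cdot t^{-1+2/r}=t^{1-2/q}$, so the product with the dispersive factor is $t^{1-2/q}\cdot t^{-1+2/q}=O(1)$. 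That falls short of the claimed $t^{-1/2+2/q}$ by exactly $t^{-1/2}$.

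To gain the missing $t^{-1/2}$, the first thing to try is a sharper bound on $\nabla\Phi_t$. Using $\|\nabla u(\tau)\|_{L^2}\lesssim1$ together with the Sobolev/interpolation decay $\| |u|(\tau)\|_{\dot H^{1}}$, the aim is
\[
\|\nabla\Phi_t\|_{L^{2}}\lesssim t^{1/2},
\]
obtained by interpolating between $\|u\|_{L^2}$ and $\|xu/\tau\|$-type control inside the $\tau$ integral. The rest of the chain is as above.
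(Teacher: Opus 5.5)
There is a genuine gap, and it sits at the one step that carries the decay. Your treatment of $f_2$ and $f_3$ is correct and matches the paper's. For $f_1$ your architecture is also the paper's: the $L^q\to W^{1,q}$ bound for $(1-\Delta)^{-1}\nabla\cdot$, the dispersive factor $t^{-1+2/q}$, and H\"older $\|(\nabla\Phi_t)u\|_{L^{q'}}\le\|\nabla\Phi_t\|_{L^2}\|u\|_{L^\mu}$ with $\mu=2q/(q-2)\in[2,4]$. With $\|u\|_{L^\mu}\lesssim1$ from the $H^1$ bound, everything reduces to $\|\nabla\Phi_t\|_{L^2}\lesssim t^{1/2}$.

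You state that bound only as an ``aim'', and the mechanism you gesture at cannot produce it. Interpolating between $\|u\|_{L^2}$ and $\|xu/\tau\|_{L^2}$ gives no information about a gradient. Moreover, $\|\nabla u(\tau)\|_{L^2}$ does not decay at all: by Corollary \ref{cor:kinPot} it tends to $2E_u(0)^{1/2}>0$. The missing idea is that only the modulus enters $\Phi_t$, and $|u(\tau)|=|e^{-i|x|^2/(2\tau)}u(\tau)|$. Since $\nabla\bigl(e^{-i|x|^2/(2\tau)}u\bigr)=(i\tau)^{-1}e^{-i|x|^2/(2\tau)}J(\tau)u$, the diamagnetic inequality applied to the modulated function gives
\[
\|\nabla|u(\tau)|\|_{L^2}\le\bigl\|\nabla\bigl(e^{-i|x|^2/(2\tau)}u(\tau)\bigr)\bigr\|_{L^2}=\tau^{-1}\|J(\tau)u(\tau)\|_{L^2}.
\]
This strips off the non-decaying oscillatory part $i\frac{x}{\tau}u$ of $\nabla u$. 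Combined with your (correct) scaling cancellation, it yields $\|\nabla\Phi_t\|_{L^2}\le\int_1^t\tau^{-1}\|J(\tau)u(\tau)\|_{L^2}\,d\tau$.

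Your input on $J$ is also wrong. Here $n\alpha=2<4$, so the pseudoconformal energy is not conserved: $\frac{d}{dt}\mathcal E^{\rm conf}_u=\frac t3\int|u|^3\ge0$. Gronwall then gives only $\mathcal E^{\rm conf}_u(t)\lesssim t$ (Lemma \ref{uniform1}), that is $\|J(t)u(t)\|_{L^2}\lesssim t^{1/2}$, not $\lesssim1$.

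It follows that the decay $\|u(t)\|_{L^r}\lesssim t^{-1+2/r}$ you derived is not available; Gagliardo--Nirenberg gives only $t^{-(1-2/r)/2}$. Fortunately it is not needed. The $t^{1/2}$ growth of $\|Ju\|_{L^2}$ gives $\|\nabla|u(\tau)|\|_{L^2}\lesssim\tau^{-1/2}$, hence $\|\nabla\Phi_t\|_{L^2}\lesssim t^{1/2}$. Note that your claimed $\|Ju\|_{L^2}\lesssim1$ would give $\log t$ in the same argument, so your target $t^{1/2}$ did not actually follow from your stated inputs. With both points fixed, your chain gives $t^{-1+2/q}\cdot t^{1/2}\cdot1=t^{-1/2+2/q}$, as claimed.
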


\begin{proof}
The estimate for \(f_2\) follows from the \(L^2\)-boundedness of the
multiplier
\[
 (1-\Delta)^{-1}\nabla\cdot
\]
from \(L^2\) to \(H^1\), the unitarity of \(U(t)\) on \(L^2\), and the
uniform \(H^1\) bound for \(u\). Indeed,
\[
 \|f_2(t)\|_{H^1}
 \lesssim
 \|e^{i\Phi_t(u)}\nabla u(t)\|_2
 =
 \|\nabla u(t)\|_2
 \lesssim 1.
\]

Similarly,
\[
 \|f_3(t)\|_{H^2}
 \lesssim
 \|e^{i\Phi_t(u)}u(t)\|_2
 =
 \|u(t)\|_2
 =
 \|u_0\|_2.
\]
Since \(H^2(\R^2)\hookrightarrow W^{1,q}(\R^2)\) for every finite \(q\),
we also have
\[
 \|f_3(t)\|_{W^{1,q}}\lesssim 1.
\]

It remains to estimate \(f_1\). The operator
\[
 (1-\Delta)^{-1}\nabla\cdot
\]
maps \(L^q\) boundedly into \(W^{1,q}\) for \(1<q<\infty\). Hence, by
the dispersive estimate in dimension two,
\[
 \|f_1(t)\|_{W^{1,q}}
 \lesssim
 \|U(-t)[(\nabla e^{i\Phi_t(u)})u(t)]\|_{L^q}
 \lesssim
 t^{-1+2/q}
 \|(\nabla e^{i\Phi_t(u)})u(t)\|_{L^{q'}}.
\]
Choose
\[
 \mu=\frac{2q}{q-2},
\]
so that
\[
 \frac1{q'}=\frac12+\frac1\mu.
\]
Since
\[
 \nabla e^{i\Phi_t(u)}
 =
 i e^{i\Phi_t(u)}\nabla\Phi_t(u),
\]
Hölder's inequality gives
\[
 \|(\nabla e^{i\Phi_t(u)})u(t)\|_{L^{q'}}
 \le
 \|\nabla\Phi_t(u)\|_{L^2}\|u(t)\|_{L^\mu}.
\]
For \(q\ge4\), we have \(2\le\mu\le4\), and therefore the uniform
\(H^1\) bound implies
\[
 \|u(t)\|_{L^\mu}\lesssim1.
\]

By the chain rule,
\[
 \nabla_x\Phi_t(u)(x)
 =
 \int_1^t
 \frac{\tau}{t}
 \nabla |u|\left(\tau,\frac{\tau}{t}x\right)\,d\tau.
\]
In two dimensions, the scaling factors cancel:
\[
 \left\|
 \frac{\tau}{t}
 \nabla |u|\left(\tau,\frac{\tau}{t}x\right)
 \right\|_{L_x^2}
 =
 \|\nabla |u(\tau)|\|_2.
\]
Using the diamagnetic inequality and the pseudoconformal bound from
Appendix B, we obtain
\[
 \|\nabla |u(\tau)|\|_2
 \le
 \left\|\nabla\left(e^{-i|x|^2/(2\tau)}u(\tau)\right)\right\|_2
 =
 \tau^{-1}\|J(\tau)u(\tau)\|_2
 \lesssim
 \tau^{-1/2}.
\]
Here \(J(t)=x+it\nabla\), so that
\[ \left\|\nabla\left(e^{-i|x|^2/(2t)}u(t)\right)\right\|_2 = t^{-1}\|J(t)u(t)\|_2 . \]
Consequently,
\[
 \|\nabla\Phi_t(u)\|_2
 \lesssim
 \int_1^t \tau^{-1/2}\,d\tau
 \lesssim t^{1/2}.
\]
Combining the preceding estimates gives
\[
 \|f_1(t)\|_{W^{1,q}}
 \lesssim
 t^{-1+2/q}t^{1/2}
 =
 t^{-1/2+2/q}.
\]
This proves \eqref{estf1f3}.
\end{proof}

\subsection{Proof of Theorem \ref{MT1}}

We first prove the following lemma..
\begin{lem}\label{lstep2}
If $u(t)$ is a radial function in $\mathbb R^2$, then $f_1,f_2,f_3$ in \eqref{decomp} are radial.
\end{lem}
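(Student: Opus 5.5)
We argue by rotation invariance. For $R\in O(2)$ set $(T_Rf)(x):=f(R^{-1}x)$; a function is radial exactly when $T_Rf=f$ for every $R$. The plan has three steps. First, the phase $e^{i\Phi_t(u)}$ is radial. Second, every operator in \eqref{decomp} commutes with $T_R$, with the gradient and divergence intertwining covariantly. Third, the argument of each $f_k$ is radial or equivariant, so each $f_k$ is radial.

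\emph{Step 1: the phase.} Since $u(\tau)$ is radial for each $\tau$, we have $|u(\tau,\frac{\tau}{t}R^{-1}x)|=|u(\tau,\frac{\tau}{t}x)|$. Integrating in $\tau$ gives $T_R\Phi_t(u)=\Phi_t(u)$. Hence $e^{i\Phi_t(u)}$ and $e^{i\Phi_t(u)}u(t)$ are radial.

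\emph{Step 2: commutation.} The operators $U(-t)=e^{-it\Delta/2}$ and $(1-\Delta)^{-1}$ are Fourier multipliers with radial symbols $e^{it|\xi|^2/2}$ and $(1+|\xi|^2)^{-1}$. Since $\widehat{T_Rf}=T_R\hat f$, both commute with $T_R$ on $L^2$.

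For vector fields $F$, define $(\mathcal T_RF)(x):=RF(R^{-1}x)$. The chain rule gives $\nabla(T_Rg)=\mathcal T_R\nabla g$. For the divergence, $\nabla\cdot(\mathcal T_RF)=T_R(\nabla\cdot F)$, because $\mathrm{tr}(R\,DF\,R^{-1})=\mathrm{tr}\,DF$. Both identities hold distributionally, and in particular for $L^2$ fields.

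\emph{Step 3: conclusion.} $f_3$ is radial directly from Steps 1 and 2. For $f_2$, radiality of $u(t)$ and of the phase gives $\mathcal T_R[e^{i\Phi_t(u)}\nabla u(t)]=e^{i\Phi_t(u)}\nabla u(t)$. The componentwise action of $U(-t)$ commutes with $\mathcal T_R$, since it commutes with $T_R$ and with constant matrices. Then
\[
T_Rf_2=-(1-\Delta)^{-1}\nabla\cdot\,\mathcal T_R\,U(-t)[e^{i\Phi_t(u)}\nabla u(t)]=f_2 .
\]
The term $f_1$ is handled the same way, since $\nabla e^{i\Phi_t(u)}=\mathcal T_R\nabla e^{i\Phi_t(u)}$ and $u(t)$ is radial. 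Because $T_Rf_k=f_k$ for all $R\in O(2)$, each $f_k$ is radial.

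The only delicate point is Step 2 for the vector-valued terms. One must check that $U(-t)$ commutes with $\mathcal T_R$ and not merely with $T_R$, and that the divergence turns $\mathcal T_R$ into $T_R$. Both are the short linear-algebra facts noted above.
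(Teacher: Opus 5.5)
Your proof is correct, but it follows a different route from the paper. The paper works inside the degree-one spherical-harmonic sector. It writes $(\nabla e^{i\Phi_t})u=\frac{x}{|x|}a(|x|)$ and invokes Appendix C (the unitary equivalence of $-\Delta$ on that sector with the four-dimensional radial Laplacian, plus the spectral theorem) to see that $U(-t)$ and the resolvent preserve the sector. It then computes $\operatorname{div}\bigl(\frac{x}{|x|}a(|x|)\bigr)$ explicitly to get $f_1$ radial, and obtains $f_2$ indirectly from $f_2=v-f_1-f_3$, using that $v=U(-t)w$ is radial.

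You instead use only $O(2)$-equivariance: radial Fourier multipliers commute with $T_R$, and $\nabla$ and $\nabla\cdot$ intertwine $T_R$ with $\mathcal T_R$. This is shorter and more elementary. It bypasses Appendix C entirely, treats $f_1$ and $f_2$ by the same argument, and works verbatim in any dimension and for any radial multiplier. The paper's route yields extra structure: the intermediate fields are explicitly of the form $x\,b(|x|)$, with $U(-t)$ acting through the four-dimensional radial propagator (Corollary \ref{corD2}). The lemma itself does not need that structure.

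One small correction. You state the commutation of $U(-t)$ and $(1-\Delta)^{-1}$ with $T_R$ on $L^2$. However, the field $(\nabla e^{i\Phi_t(u)})u(t)$ entering $f_1$ is only shown to lie in $L^{q'}$ (via the H\"older bound in the proof of Lemma \ref{lemf}), not in $L^2$. State the commutation on $\mathcal S'$ instead. It holds there by the same argument, since $\widehat{T_Rf}=T_R\hat f$ on $\mathcal S'$ and both symbols are smooth, radial, and of polynomial growth. On this point your argument is more robust than the $L^2$ spectral-theoretic argument of Appendix C.
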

\begin{proof}

Since $u(t)$ is radial and the phase $\Phi_t(u)$ is radial, the function \[ w:=e^{i\Phi_t(u)}u(t) \] defined in \eqref{signconv} is radial. By the definition of $f_3$ in \eqref{decomp}, this also implies that $f_3$ is radial.

Moreover,
\[
 \nabla e^{i\Phi_t}=i e^{i\Phi_t}\frac{x}{|x|}\partial_r\Phi_t, \ \ \partial_r = \frac{x}{|x|} \cdot \nabla,
\]
so $(\nabla e^{i\Phi_t})u$ is a radial vector field, namely a vector field of the form \[ \frac{x}{|x|}a(|x|) \] for some scalar radial function $a$. Equivalently, it belongs to the degree-one spherical-harmonic sector. By Appendix C, the propagator $U(-t)$ preserves this sector.  Moreover, the radial resolvent appearing in the definition of $f_1$ also preserves spherical-harmonic sectors. Therefore, after applying the radial resolvent, we still obtain a degree-one radial vector field. Taking the divergence of such a vector field gives a radial scalar function, since
\[ \operatorname{div}\!\left(\frac{x}{|x|}a(|x|)\right) = a'(|x|)+\frac{1}{|x|}a(|x|), x \in \mathbb R^2 \]
Thus $f_1$ is radial. Finally, since $v$, $f_1$, and $f_3$ are radial, the identity \[ f_2=v-f_1-f_3 \] implies that $f_2$ is radial as well.
\end{proof}

We are now in a position to apply the Strauss--Lions estimate \cite{St77,Li82,CO09}, so we can write
\[
 |f(x)|\lesssim |x|^{-1/q}\|f\|_{W^{1,q}_{\rm rad}},\qquad 1\le q<\infty.
\]
Consequently, if $\rho>2q$ and $0<\delta<1/q-2/\rho$,
\begin{equation}\label{eq.rhoq}
 \|\langle x\rangle^\delta f\|_{L^\rho}\lesssim\|f\|_{W^{1,q}_{\rm rad}}.
\end{equation}
Indeed, the local part follows from Sobolev embedding, while
\[
 \||x|^\delta f\|_{L^\rho(|x|>1)}^\rho
 \lesssim\|f\|_{W^{1,q}}^\rho\int_1^\infty r^{\rho\delta-\rho/q+1}dr,
\]
which converges precisely under the stated condition. Applying this with $q=4$ to $f_1,f_3$, and with $q=2$ to $f_2$, yields
\begin{equation}\label{weighted}
 \sup_{t\ge1}\||x|^\delta v(t)\|_{L^\rho(|x|>1)}<\infty
\end{equation}
whenever $\rho>8$ and $\delta+2/\rho<1/4$.

On the unit ball, the decomposition \eqref{decomp}, the uniform bounds
\[
\sup_{t\ge1}\bigl(
\|f_1(t)\|_{W^{1,4}}
+\|f_2(t)\|_{H^1}
+\|f_3(t)\|_{H^2}
\bigr)<\infty,
\]
and the two-dimensional Sobolev embeddings
\[
W^{1,4}\hookrightarrow L^\infty,\qquad
H^1\hookrightarrow L^\rho\quad(2\le\rho<\infty),\qquad
H^2\hookrightarrow L^\infty
\]
give
\begin{equation}\label{localrho}
 \sup_{t\ge1}\|v(t)\|_{L^\rho(|x|<1)}<\infty
\end{equation}
for every \(2\le\rho<\infty\). In particular, this holds for every
finite \(\rho>8\).

Next, we turn to the exterior interpolation. Applying Corollary
\ref{trealint} to \(f=v(t)-u_+\), we obtain
\begin{equation}\label{wpv1}
 \||x|^{-\theta}(v(t)-u_+)\|_2
 \lesssim
 \|v(t)-u_+\|_{\F^{-1}\dot B^{-1}_{2,\infty}}^\theta
 \|v(t)-u_+\|_2^{1-\theta},
\end{equation}
where \(\theta\in(0,1)\). Since \(v(t)\to u_+\) in
\(\F^{-1}\dot B^{-1}_{2,\infty}\) and \(\|v(t)-u_+\|_2\) is uniformly
bounded, it follows that
\[
 \||x|^{-\theta}(v(t)-u_+)\|_{L^2(|x|>1)}\to0.
\]

 Fix $p>2$ and choose
\[
 \rho>\max\{8,p\},\qquad
 \sigma=\frac{1/p-1/\rho}{1/2-1/\rho}\in(0,1).
\]
Choose $\theta>0$ such that
\[
 \delta:=\frac{\sigma\theta}{1-\sigma},\qquad
 \delta+\frac2\rho<\frac14.
\]
Weighted H\"older interpolation gives
\[
 \|f\|_{L^p(|x|>1)}
 \leq \||x|^{-\theta}f\|_2^\sigma
 \||x|^\delta f\|_\rho^{1-\sigma}.
\]

By \eqref{weighted}, the family
\[
\{|x|^\delta v(t):t\ge1\}
\]
is bounded in the reflexive space \(L^\rho(|x|>1)\). Hence, for any
sequence \(t_k\to\infty\), there is a subsequence, still denoted \(t_k\),
and some \(G\in L^\rho(|x|>1)\) such that
\[
|x|^\delta v(t_k)\rightharpoonup G
\quad\text{weakly in }L^\rho(|x|>1).
\]
On every bounded annulus \(A_R=\{x \in \mathbb R^2: 1<|x|<R\} \), Theorem \ref{MT0} gives
\[
v(t_k)\to u_+
\quad\text{strongly in }L^2(A_R).
\]
It follows that
\[
G=|x|^\delta u_+
\quad\text{a.e. on } A_R.
\]
Since \(R>1\) is arbitrary,
\[
|x|^\delta u_+\in L^\rho(|x|>1),
\]
and weak lower semicontinuity gives
\[
\||x|^\delta u_+\|_{L^\rho(|x|>1)}
\le
\liminf_{k\to\infty}
\||x|^\delta v(t_k)\|_{L^\rho(|x|>1)}
\lesssim1.
\]
Consequently,
\[
\sup_{t\ge1}
\||x|^\delta(v(t)-u_+)\|_{L^\rho(|x|>1)}
<\infty.
\]
Combining this with
\[
\||x|^{-\theta}(v(t)-u_+)\|_2\to0
\]
and weighted Hölder interpolation yields
\[
\|v(t)-u_+\|_{L^p(|x|>1)}\to0.
\]

It remains to treat the origin. By Theorem \ref{MT0},
\[
 \|v(t)-u_+\|_{L^2(|x|<1)}\to0.
\]
By \eqref{localrho} and weak lower semicontinuity, $u_+\in L^\rho(|x|<1)$ and $v(t)-u_+$ is uniformly bounded there in $L^\rho$. If
\[
 \frac1p=\frac\alpha2+\frac{1-\alpha}{\rho},
\]
then
\[
 \|v(t)-u_+\|_{L^p(|x|<1)}
 \le\|v(t)-u_+\|_{L^2(|x|<1)}^\alpha
 \|v(t)-u_+\|_{L^\rho(|x|<1)}^{1-\alpha}\to0.
\]
This completes the proof.

\appendix
\section{Interpolation in $\F^{-1}(\dot B^s_{2,r})$}

Let $\varphi\in C_c^\infty((1/2,2))$ be chosen so that the functions
\begin{equation}\label{eq:dyadic-cutoff}
 \varphi_j(\xi):=\varphi(2^{-j}|\xi|),
 \qquad j\in\mathbb Z,
\end{equation}
form a homogeneous Paley--Littlewood partition of unity, namely
\begin{equation}\label{eq:partition-unity}
 \sum_{j\in\mathbb Z}\varphi_j(\xi)=1,
 \qquad \xi\neq 0.
\end{equation}
For $s\in\mathbb R$ and $1\le r\le\infty$, we define the Fourier-side homogeneous Besov space
$\F^{-1}\dot B^s_{2,r}$ by
\begin{equation}\label{FB1}
 \|G\|_{\F^{-1}\dot B^s_{2,r}}
 :=
 \left\|
   2^{js}\|\varphi_jG\|_{L^2_\xi}
 \right\|_{\ell^r_{j\in\mathbb Z}}.
\end{equation}
Here $G$ is understood as a tempered distribution and the products
$\varphi_jG$ are taken in the distributional sense. Thus
$G\in\F^{-1}\dot B^s_{2,r}$ if $\varphi_jG\in L^2(\R^2)$ for every
$j\in\mathbb Z$ and the right-hand side of \eqref{FB1} is finite.

For $r=\infty$, the definition is interpreted as
\[
 \|G\|_{\F^{-1}\dot B^s_{2,\infty}}
 =
 \sup_{j\in\mathbb Z}
 2^{js}\|\varphi_jG\|_{L^2_\xi}.
\]
As usual for homogeneous Besov spaces, this defines a norm modulo
distributions supported at the origin in the Fourier variable.

For \(s\in\R\) and \(1\le r\le\infty\), we use the notation $\dot \ell_r^s$ for discrete  dyadic weighted  Lebesgue space with norm
\begin{equation}
  \|a\|_{\dot \ell_r^s} =
 \left\{\begin{aligned}
     &\left( \sum_{j\in \mathbb Z} |2^{js} a_j|^r \right)^{1/r}  \mbox{if $1 \leq r < \infty$,} \\
     &\|2^{js} a_j\|_{\ell^\infty} \ \ \mbox{if $r=\infty$}
  \end{aligned}\right.
\end{equation}
for any $a = (a_j)_{j \in \mathbb Z} \in \dot \ell_r^s. $ In the case of sequences with values in a Banach space $\mathcal B$ we have the corresponding space  $\dot \ell_r^s (\mathcal B).$ In the particular case $\mathcal B = L^2$ for any $G \in L^2_{loc}(\mathbb R^2 \setminus \{0\}) $ we can define the dyadic annuli
\[
 A_j:=\{x\in\R^2:2^j\le |x|<2^{j+1}\},
 \qquad j\in\mathbb Z.
\]
and define
\[
 \|G\|_{\dot \ell_r^s(L^2(\mathbb R^2))}
 :=
 \left\| \|G\|_{L^2(A_j)} \right\|_{\dot \ell_r^s}
 \]
Then we have
\[
 \dot \ell_r^s(L^2(\mathbb R^2))=\F^{-1}\dot B^s_{2,r} (\mathbb R^2)
\]
with equivalent norms, i.e.
\begin{equation}\label{equivBesov}
    \|G\|_{\dot \ell_r^s(L^2)}
 \sim
 \|G\|_{\F^{-1}\dot B^s_{2,r}}.
\end{equation}

This is true because the Littlewood multipliers $\varphi_j$ are supported on finitely overlapping dyadic annuli and there exists an integer $N \geq 1$ so that for any $j \in \mathbb Z$ we have
\begin{equation}
    \mathds{1}_{A_j}(\xi) \leq \sum_{|j-k|\leq N} \varphi_k(\xi), \qquad   \varphi_j(\xi) \leq \sum_{|j-k|\leq N}\mathds{1}_{A_k} (\xi).
\end{equation}

Before stating the main interpolation result of this section we prove the following elementary discrete Hardy inequalities.

\begin{lem}\label{lem:discrete-hardy}
Let \(\alpha>0\) and \(1\le p,q\le\infty\). Then
\begin{equation}\label{eq:hardy-tail}
 \left\|
  2^{\alpha n}
  \left\|a_j\right\|_{\ell^q_{j>n}}
 \right\|_{\ell^p_n}
 \sim
 \left\|2^{\alpha j}a_j\right\|_{\ell^p_j},
\end{equation}
and
\begin{equation}\label{eq:hardy-head}
 \left\|
  2^{-\alpha n}
  \left\|a_j\right\|_{\ell^q_{j\le n}}
 \right\|_{\ell^p_n}
 \sim
 \left\|2^{-\alpha j}a_j\right\|_{\ell^p_j}.
\end{equation}
The constants depend only on \(\alpha,p,q\).
\end{lem}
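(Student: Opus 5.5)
We prove each equivalence by establishing its two inequalities separately. Consider first \eqref{eq:hardy-tail}. The lower bound ($\gtrsim$) is immediate and needs no Hardy argument. For each $n$ we have $\|a_j\|_{\ell^q_{j>n}}\ge |a_{n+1}|$. Hence
\[
 \left\|2^{\alpha n}\|a_j\|_{\ell^q_{j>n}}\right\|_{\ell^p_n}
 \ge 2^{-\alpha}\left\|2^{\alpha (n+1)}a_{n+1}\right\|_{\ell^p_n}
 =2^{-\alpha}\left\|2^{\alpha j}a_j\right\|_{\ell^p_j}.
\]
In the same way, for \eqref{eq:hardy-head} we use $\|a_j\|_{\ell^q_{j\le n}}\ge|a_n|$, which gives the lower bound with constant $1$.

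The substantive direction is the upper bound. Since $\ell^1\hookrightarrow\ell^q$ with norm $1$, it suffices to treat $q=1$, i.e.\ to bound
\[
 \left\|2^{\alpha n}\sum_{j>n}|a_j|\right\|_{\ell^p_n}.
\]
Set $b_j:=2^{\alpha j}|a_j|$. Then
\[
 2^{\alpha n}\sum_{j>n}|a_j|=\sum_{k\ge1}2^{-\alpha k}b_{n+k}.
\]
This is a convolution of $b$ with the kernel $k\mapsto 2^{-\alpha k}\mathbb 1_{k\ge1}$, which lies in $\ell^1(\mathbb Z)$ with norm $\sum_{k\ge1}2^{-\alpha k}=(2^\alpha-1)^{-1}$, finite precisely because $\alpha>0$. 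Equivalently, by Minkowski's inequality and translation invariance of $\ell^p$,
\[
 \left\|\sum_{k\ge1}2^{-\alpha k}b_{n+k}\right\|_{\ell^p_n}
 \le\sum_{k\ge1}2^{-\alpha k}\|b\|_{\ell^p}
 =(2^\alpha-1)^{-1}\left\|2^{\alpha j}a_j\right\|_{\ell^p_j}.
\]
This argument is valid for all $1\le p\le\infty$ (Young's inequality $\ell^1*\ell^p\subset\ell^p$).

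The head estimate \eqref{eq:hardy-head} is handled by the same device. Put $c_j:=2^{-\alpha j}|a_j|$. Then
\[
 2^{-\alpha n}\sum_{j\le n}|a_j|=\sum_{k\ge0}2^{-\alpha k}c_{n-k},
\]
which is again a convolution with an $\ell^1$ kernel, now of norm $(1-2^{-\alpha})^{-1}$.

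The only point needing care is the reduction from general $q$ to $q=1$ together with the summability of the geometric kernel. The latter is exactly where the hypothesis $\alpha>0$ enters; without it both sums diverge and the estimates fail. The resulting constants depend only on $\alpha$; the dependence on $p,q$ allowed in the statement is not even needed.
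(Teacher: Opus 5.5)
Your proof is correct and follows essentially the same route as the paper. You reduce to $q=1$ via $\|\cdot\|_{\ell^q}\le\|\cdot\|_{\ell^1}$, write the weighted tail (resp.\ head) sum as a convolution of $b_j=2^{\alpha j}|a_j|$ (resp.\ $c_j=2^{-\alpha j}|a_j|$) with a summable geometric kernel and apply Young's inequality, and get the reverse bound from the single-term estimate $\|a_j\|_{\ell^q_{j>n}}\ge|a_{n+1}|$; your explicit handling of the head case and the remark that the constants depend only on $\alpha$ are minor additions to what the paper writes.
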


\begin{proof}
We prove \eqref{eq:hardy-tail}; the proof of \eqref{eq:hardy-head} is identical after reversing the order of the indices.

Put
\[
 b_j:=2^{\alpha j}|a_j|.
\]
Since \(q\ge1\),
\[
 \left\|a_j\right\|_{\ell^q_{j>n}}
 \le
 \sum_{j>n}|a_j|
 =
 \sum_{j>n}2^{-\alpha j}b_j.
\]
Hence
\[
 2^{\alpha n}
 \left\|a_j\right\|_{\ell^q_{j>n}}
 \le
 \sum_{j>n}2^{-\alpha(j-n)}b_j.
\]
The right-hand side is the convolution of \(b\) with the summable kernel
\[
 k_m=
 \begin{cases}
 2^{-\alpha m},& m\ge1,\\
 0,& m\le0.
 \end{cases}
\]
Therefore, by Young's inequality on \(\mathbb Z\),
\[
 \left\|
  2^{\alpha n}
  \left\|a_j\right\|_{\ell^q_{j>n}}
 \right\|_{\ell^p_n}
 \le
 C
 \|b\|_{\ell^p}
 =
 C
 \left\|2^{\alpha j}a_j\right\|_{\ell^p_j}.
\]

Conversely, for every \(j\),
\[
 \left\|a_k\right\|_{\ell^q_{k>j-1}}
 \ge |a_j|.
\]
Thus
\[
 2^{\alpha(j-1)}
 \left\|a_k\right\|_{\ell^q_{k>j-1}}
 \ge
 2^{-\alpha}2^{\alpha j}|a_j|.
\]
Taking the \(\ell^p\)-norm in \(j\) gives the reverse inequality. The case
\(p=\infty\) is obtained in the same way, replacing sums by suprema.
\end{proof}

The statement of the Proposition below  is Theorem 5.6.1 in \cite{BL76} (for the case of homogeneous Besov spaces see \cite{OT25} and Theorem 2.6 in \cite{G24}). For completeness we give a self-contained proof.
\begin{prop}\label{prop:weighted-sequence-interpolation}
Let $\mathcal B$ be a Banach space, let $s\neq0$, let
$0<\theta<1$, and let
\[
 1\le q_1,q_2,p\le\infty.
\]
Then
\begin{equation}\label{eq:weighted-sequence-interpolation}
 \bigl(
   \dot\ell^0_{q_1}(\mathcal B),
   \dot\ell^s_{q_2}(\mathcal B)
 \bigr)_{\theta,p}
 =
 \dot\ell^{\theta s}_{p}(\mathcal B)
\end{equation}
with equivalent norms. More precisely,
\[
 \|a\|_{
   (\dot\ell^0_{q_1}(\mathcal B),
    \dot\ell^s_{q_2}(\mathcal B))_{\theta,p}
 }
 \sim
 \left\|
   2^{\theta sj}\|a_j\|_{\mathcal B}
 \right\|_{\ell^p_j},
\]
where the constants of equivalence depend only on
$s,\theta,p,q_1$, and $q_2$.
\end{prop}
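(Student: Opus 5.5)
We will prove the two continuous embeddings separately, working throughout with the $K$-functional
\[
 K(t,a)=\inf_{a=b+c}\Bigl(\|b\|_{\dot\ell^0_{q_1}(\mathcal B)}+t\|c\|_{\dot\ell^s_{q_2}(\mathcal B)}\Bigr),
\]
and the norm
\[
 \|a\|_{\theta,p}=\left\|t^{-\theta}K(t,a)\right\|_{L^p(dt/t)}.
\]
First we discretize: since $K(\cdot,a)$ is nondecreasing and $K(t,a)/t$ is nonincreasing, we may replace $t$ by $2^{-sn}$, $n\in\mathbb Z$. This gives
\[
 \|a\|_{\theta,p}\sim\left\|2^{\theta sn}K(2^{-sn},a)\right\|_{\ell^p_n}.
\]
By symmetry, replacing $j$ by $-j$, we may assume $s>0$. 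Only the scalar sequence $\alpha_j:=\|a_j\|_{\mathcal B}$ matters, because all norms involved depend on $a$ only through $(\alpha_j)$ and the decomposition can be done componentwise.

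\textbf{Upper bound for $K$.} For fixed $n$ we take the cutoff decomposition $b_j=a_j\mathbf 1_{j>n}$ and $c_j=a_j\mathbf 1_{j\le n}$. Then
\[
 K(2^{-sn},a)\le \|\alpha_j\|_{\ell^{q_1}_{j>n}}+2^{-sn}\|2^{sj}\alpha_j\|_{\ell^{q_2}_{j\le n}}.
\]
Multiplying by $2^{\theta sn}$, the first term is $2^{\theta sn}\|\alpha_j\|_{\ell^{q_1}_{j>n}}$. Its $\ell^p_n$ norm is controlled by $\|2^{\theta sj}\alpha_j\|_{\ell^p}$ by the tail inequality \eqref{eq:hardy-tail} of Lemma \ref{lem:discrete-hardy}, applied with exponent $\theta s>0$. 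The second term is $2^{-(1-\theta)sn}\|2^{sj}\alpha_j\|_{\ell^{q_2}_{j\le n}}$. We apply \eqref{eq:hardy-head} to the sequence $2^{sj}\alpha_j$ with exponent $(1-\theta)s>0$, which gives the bound $\|2^{-(1-\theta)sj}2^{sj}\alpha_j\|_{\ell^p}=\|2^{\theta sj}\alpha_j\|_{\ell^p}$. This proves
\[
 \dot\ell^{\theta s}_p(\mathcal B)\hookrightarrow(\dot\ell^0_{q_1}(\mathcal B),\dot\ell^s_{q_2}(\mathcal B))_{\theta,p}.
\]

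\textbf{Lower bound for $K$.} For any decomposition $a=b+c$ and any fixed index $j$, we have $\alpha_j\le\|b_j\|+\|c_j\|$. Bounding each term by the corresponding sequence norm,
\[
 \|b_j\|\le\|b\|_{\dot\ell^0_{q_1}},\qquad 2^{sj}\|c_j\|\le\|c\|_{\dot\ell^s_{q_2}}.
\]
Hence
\[
 \alpha_j\le \|b\|_{\dot\ell^0_{q_1}}+2^{-sj}\|c\|_{\dot\ell^s_{q_2}},
\]
and taking the infimum yields $\alpha_j\le K(2^{-sj},a)$. Multiplying by $2^{\theta sj}$ and taking $\ell^p_j$ norms gives
\[
 \|2^{\theta sj}\alpha_j\|_{\ell^p}\lesssim\|a\|_{\theta,p}.
\]
This is the reverse embedding. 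The constants depend only on $s,\theta,p,q_1,q_2$, coming from the discretization and from Lemma \ref{lem:discrete-hardy}.

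\textbf{Main obstacle.} The delicate step is the discretization of the continuous $K$-integral. On each interval $t\in[2^{-s(n+1)},2^{-sn}]$, monotonicity gives $K(t,a)\le K(2^{-sn},a)$ and $K(t,a)\ge K(2^{-s(n+1)},a)$. The weights $t^{-\theta}$ differ by at most the factor $2^{s\theta}$ across such an interval, and $dt/t$ gives it mass $s\log 2$. Both comparisons therefore hold up to constants, including the case $p=\infty$, where integrals are replaced by suprema. We also need to check the role of sequences for which $K$ is finite but the norms are defined only modulo nothing. Here no such issue arises, since these are honest sequence spaces. The remaining hypothesis $s\neq0$ is used exactly to make the Hardy exponents $\theta s$ and $(1-\theta)s$ nonzero.
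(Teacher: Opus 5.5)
Your proof is correct and follows the paper's route: the same cutoff decomposition of $a$ into the parts $j>n$ and $j\le n$, combined with the upper bounds of Lemma \ref{lem:discrete-hardy} (tail with exponent $\theta s$, head with exponent $(1-\theta)s$) for the embedding $\dot\ell^{\theta s}_p(\mathcal B)\hookrightarrow(\dot\ell^0_{q_1}(\mathcal B),\dot\ell^s_{q_2}(\mathcal B))_{\theta,p}$, the same dyadic discretization of the $K$-integral at $t=2^{-sn}$, and the same index reversal for $s<0$. The only real difference is in the reverse embedding: you use the pointwise bound $\|a_j\|_{\mathcal B}\le K(2^{-sj},a)$ directly, whereas the paper first proves the two-sided estimate $K(2^{-ns},a)\sim H_n(a)+L_n(a)$ via the geometric-weight embedding and then needs the lower halves of the Hardy equivalences; your version is shorter, while the paper's gives a sharper description of the $K$-functional at dyadic scales.
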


\begin{proof}
We first prove the result for $s>0$. The case $s<0$ will be
deduced at the end by reversing the indices.

Set
\[
 A:=\dot\ell^0_{q_1}(\mathcal B)
   =\ell^{q_1}(\mathcal B),
 \qquad
 B:=\dot\ell^s_{q_2}(\mathcal B).
\]
Thus
\[
 \|a\|_A
 =
 \left\|\|a_j\|_{\mathcal B}\right\|_{\ell^{q_1}_j},
 \qquad
 \|a\|_B
 =
 \left\|2^{sj}\|a_j\|_{\mathcal B}\right\|_{\ell^{q_2}_j}.
\]

For $t>0$, the $K$-functional associated with the compatible
couple $(A,B)$ is
\[
 K(t,a;A,B)
 :=
 \inf_{a=b+c}
 \bigl(
   \|b\|_A+t\|c\|_B
 \bigr).
\]

We shall estimate the $K$-functional at the dyadic values
\[
 t_n:=2^{-ns},
 \qquad n\in\mathbb Z.
\]

We first record the elementary weighted sequence estimate that will
be used in the proof. If $\alpha>0$ and
$1\le p,q\le\infty$, then
\begin{equation}\label{eq:geometric-weight-embedding}
 \left\|
   2^{-\alpha m}a_m
 \right\|_{\ell^p_{m\ge0}}
 \le
 C_{\alpha,p,q}
 \|a_m\|_{\ell^q_{m\ge0}}.
\end{equation}
Indeed, if $p\ge q$, then
$\ell^q\hookrightarrow\ell^p$, and hence
\[
 \left\|2^{-\alpha m}a_m\right\|_{\ell^p}
 \le
 \left\|2^{-\alpha m}a_m\right\|_{\ell^q}
 \le
 \|a_m\|_{\ell^q}.
\]
If $p<q$, choose $r\in[1,\infty)$ such that
\[
 \frac1p=\frac1q+\frac1r.
\]
Hölder's inequality gives
\[
 \left\|2^{-\alpha m}a_m\right\|_{\ell^p}
 \le
 \left\|2^{-\alpha m}\right\|_{\ell^r}
 \|a_m\|_{\ell^q},
\]
and the geometric sequence $(2^{-\alpha m})_{m\ge0}$ belongs to
$\ell^r$. The endpoint cases involving $p=\infty$ or $q=\infty$
follow from the same argument, with the usual modifications.

Let
\[
 a\in A+B.
\]
Thus there exist $b\in A$ and $c\in B$ such that
\[
 a=b+c.
\]
For $n\in\mathbb Z$, define
\[
 H_n(a)
 :=
 \left\|\|a_j\|_{\mathcal B}\right\|_{\ell^{q_1}_{j>n}},
\]
and
\[
 L_n(a)
 :=
 2^{-ns}
 \left\|
   2^{sj}\|a_j\|_{\mathcal B}
 \right\|_{\ell^{q_2}_{j\le n}}.
\]
These quantities are finite for every $a\in A+B$. Indeed, if
$a=b+c$ with $b\in A$ and $c\in B$, then
\[
 H_n(a)
 \le
 \left\|\|b_j\|_{\mathcal B}\right\|_{\ell^{q_1}_{j>n}}
 +
 \left\|\|c_j\|_{\mathcal B}\right\|_{\ell^{q_1}_{j>n}}.
\]
The first term is bounded by $\|b\|_A$, while the estimate \eqref{eq:geometric-weight-embedding} implies
\[
 \left\|\|c_j\|_{\mathcal B}\right\|_{\ell^{q_1}_{j>n}}\le
  C2^{-ns} \left\|2^{sj}\|c_j\|_{\mathcal B}\right\|_{\ell^{q_2}_{j>n}}
 \le
 C2^{-ns}\|c\|_B.
\]
so we have $H_n(a)<\infty$.

Similarly,
\[
 L_n(a)
 \le
 2^{-ns}
 \left\|
   2^{sj}\|b_j\|_{\mathcal B}
 \right\|_{\ell^{q_2}_{j\le n}}
 +
 2^{-ns}
 \left\|
   2^{sj}\|c_j\|_{\mathcal B}
 \right\|_{\ell^{q_2}_{j\le n}}.
\]
Then again \eqref{eq:geometric-weight-embedding} implies
\[
 \left\|
   2^{sj}\|b_j\|_{\mathcal B}
 \right\|_{\ell^{q_2}_{j\le n}}
 \le 2^{sn} \left\|
   \|b_j\|_{\mathcal B}
 \right\|_{\ell^{q_1}_{j\le n}}
 \le
 C2^{ns}\|b\|_A.
\]
Since the last term is bounded by $2^{-ns}\|c\|_B$ we conclude that
$L_n(a)<\infty$.

We claim that
\begin{equation}\label{eq:K-estimate-corrected}
 K(2^{-ns},a;A,B)
 \sim
 H_n(a)+L_n(a).
\end{equation}

For the upper bound, define the coordinate truncations
\[
 a^{>n}_j
 :=
 \begin{cases}
  a_j,&j>n,\\
  0,&j\le n,
 \end{cases}
 \qquad
 a^{\le n}_j
 :=
 \begin{cases}
  a_j,&j\le n,\\
  0,&j>n.
 \end{cases}
\]
Since $H_n(a)<\infty$, we have $a^{>n}\in A$, with
\[
 \|a^{>n}\|_A=H_n(a).
\]
Since $L_n(a)<\infty$, we have $a^{\le n}\in B$, with
\[
 2^{-ns}\|a^{\le n}\|_B=L_n(a).
\]
The decomposition
\[
 a=a^{>n}+a^{\le n}
\]
is therefore an admissible decomposition in the definition of the
$K$-functional. Consequently,
\[
 \begin{aligned}
 K(2^{-ns},a;A,B)
 &\le
 \|a^{>n}\|_A
 +
 2^{-ns}\|a^{\le n}\|_B\\
 &=H_n(a)+L_n(a).
 \end{aligned}
\]

For the reverse inequality, let
\[
 a=b+c,
 \qquad b\in A,\quad c\in B.
\]
By the triangle inequality,
\begin{equation}\label{eq:H-decomposition}
 H_n(a)
 \le
 \left\|\|b_j\|_{\mathcal B}\right\|_{\ell^{q_1}_{j>n}}
 +
 \left\|\|c_j\|_{\mathcal B}\right\|_{\ell^{q_1}_{j>n}}.
\end{equation}
The first term is bounded by $\|b\|_A$. For the second term, write
\[
 \|c_j\|_{\mathcal B}
 =
 2^{-ns}2^{-s(j-n)}
 \bigl(2^{sj}\|c_j\|_{\mathcal B}\bigr),
 \qquad j>n.
\]
Applying \eqref{eq:geometric-weight-embedding} with
$\alpha=s$, $p=q_1$, and $q=q_2$, after the change of variables
$m=j-n$, gives
\[
 \begin{aligned}
 \left\|\|c_j\|_{\mathcal B}\right\|_{\ell^{q_1}_{j>n}}
 &=
 2^{-ns}
 \left\|
   2^{-s(j-n)}
   2^{sj}\|c_j\|_{\mathcal B}
 \right\|_{\ell^{q_1}_{j>n}}\\
 &\le
 C2^{-ns}
 \left\|
   2^{sj}\|c_j\|_{\mathcal B}
 \right\|_{\ell^{q_2}_{j>n}}\\
 &\le
 C2^{-ns}\|c\|_B.
 \end{aligned}
\]
Consequently,
\begin{equation}\label{eq:H-lower-bound}
 H_n(a)
 \le
 C\bigl(
   \|b\|_A+2^{-ns}\|c\|_B
 \bigr).
\end{equation}

We estimate $L_n(a)$ similarly. By the triangle inequality,
\[
 \begin{aligned}
 L_n(a)
 &\le
 2^{-ns}
 \left\|
   2^{sj}\|b_j\|_{\mathcal B}
 \right\|_{\ell^{q_2}_{j\le n}}\\
 &\quad+
 2^{-ns}
 \left\|
   2^{sj}\|c_j\|_{\mathcal B}
 \right\|_{\ell^{q_2}_{j\le n}}.
 \end{aligned}
\]
The second term is bounded by $2^{-ns}\|c\|_B$. For the first term,
observe that, for $j\le n$,
\[
 2^{sj}\|b_j\|_{\mathcal B}
 =
 2^{ns}2^{-s(n-j)}\|b_j\|_{\mathcal B}.
\]
Using \eqref{eq:geometric-weight-embedding}, now with the change of
variables $m=n-j$, we obtain
\[
 \begin{aligned}
 \left\|
   2^{sj}\|b_j\|_{\mathcal B}
 \right\|_{\ell^{q_2}_{j\le n}}
 &=
 2^{ns}
 \left\|
   2^{-s(n-j)}\|b_j\|_{\mathcal B}
 \right\|_{\ell^{q_2}_{j\le n}}\\
 &\le
 C2^{ns}
 \left\|
   \|b_j\|_{\mathcal B}
 \right\|_{\ell^{q_1}_{j\le n}}\\
 &\le
 C2^{ns}\|b\|_A.
 \end{aligned}
\]
It follows that
\begin{equation}\label{eq:L-lower-bound}
 L_n(a)
 \le
 C\bigl(
   \|b\|_A+2^{-ns}\|c\|_B
 \bigr).
\end{equation}
Combining \eqref{eq:H-lower-bound} and
\eqref{eq:L-lower-bound}, we find
\[
 H_n(a)+L_n(a)
 \le
 C\bigl(
   \|b\|_A+2^{-ns}\|c\|_B
 \bigr).
\]
Taking the infimum over all decompositions $a=b+c$ gives
\[
 H_n(a)+L_n(a)
 \le
 C K(2^{-ns},a;A,B).
\]
Together with the upper bound, this proves
\eqref{eq:K-estimate-corrected}.

We next discretize the real interpolation norm. Suppose first that
$1\le p<\infty$. By definition,
\[
 \|a\|_{(A,B)_{\theta,p}}
 =
 \left(
  \int_0^\infty
  \bigl(t^{-\theta}K(t,a;A,B)\bigr)^p
  \frac{dt}{t}
 \right)^{1/p}.
\]
Since $s>0$, the intervals
\[
 I_n:=[2^{-(n+1)s},2^{-ns}],
 \qquad n\in\mathbb Z,
\]
form a partition of $(0,\infty)$. The standard monotonicity
properties of the $K$-functional imply that, for $t\in I_n$,
\[
 t^{-\theta}K(t,a;A,B)
 \sim
 2^{n\theta s}K(2^{-ns},a;A,B),
\]
with constants depending only on $s$ and $\theta$. Therefore,
\begin{equation}\label{eq:K-discretization}
 \|a\|_{(A,B)_{\theta,p}}
 \sim
 \left\|
   2^{n\theta s}K(2^{-ns},a;A,B)
 \right\|_{\ell^p_n}.
\end{equation}
When $p=\infty$, the same argument gives
\[
 \|a\|_{(A,B)_{\theta,\infty}}
 \sim
 \sup_{n\in\mathbb Z}
 2^{n\theta s}K(2^{-ns},a;A,B).
\]

Using \eqref{eq:K-estimate-corrected}, we obtain, for every
$1\le p\le\infty$,
\[
 \begin{aligned}
 &\left\|
   2^{n\theta s}K(2^{-ns},a;A,B)
 \right\|_{\ell^p_n}\\
 &\qquad\sim
 \left\|
   2^{n\theta s}H_n(a)
 \right\|_{\ell^p_n}
 +
 \left\|
   2^{n\theta s}L_n(a)
 \right\|_{\ell^p_n}.
 \end{aligned}
\]
Here we used the elementary equivalence
\[
 \|a+b\|_{\ell^p}
 \sim
 \|a\|_{\ell^p}+\|b\|_{\ell^p}
\]
for nonnegative sequences $a$ and $b$, including $p=\infty$.

For the first term, the discrete Hardy inequality
\eqref{eq:hardy-tail}, with $\alpha=\theta s>0$, gives
\[
 \begin{aligned}
 \left\|
   2^{n\theta s}H_n(a)
 \right\|_{\ell^p_n}
 &=
 \left\|
   2^{n\theta s}
   \left\|\|a_j\|_{\mathcal B}\right\|_{\ell^{q_1}_{j>n}}
 \right\|_{\ell^p_n}\\
 &\sim
 \left\|
   2^{\theta sj}\|a_j\|_{\mathcal B}
 \right\|_{\ell^p_j}.
 \end{aligned}
\]

For the second term, define the nonnegative scalar sequence
\[
 d_j:=2^{sj}\|a_j\|_{\mathcal B}.
\]
Then
\[
 2^{n\theta s}L_n(a)
 =
 2^{-n(1-\theta)s}
 \|d_j\|_{\ell^{q_2}_{j\le n}}.
\]
Since $(1-\theta)s>0$, the discrete Hardy inequality
\eqref{eq:hardy-head} yields
\[
 \begin{aligned}
 \left\|
   2^{n\theta s}L_n(a)
 \right\|_{\ell^p_n}
 &=
 \left\|
   2^{-n(1-\theta)s}
   \|d_j\|_{\ell^{q_2}_{j\le n}}
 \right\|_{\ell^p_n}\\
 &\sim
 \left\|
   2^{-(1-\theta)sj}d_j
 \right\|_{\ell^p_j}\\
 &=
 \left\|
   2^{-(1-\theta)sj}
   2^{sj}\|a_j\|_{\mathcal B}
 \right\|_{\ell^p_j}\\
 &=
 \left\|
   2^{\theta sj}\|a_j\|_{\mathcal B}
 \right\|_{\ell^p_j}.
 \end{aligned}
\]
Combining these estimates with \eqref{eq:K-discretization}, we
conclude that
\[
 \|a\|_{(A,B)_{\theta,p}}
 \sim
 \left\|
   2^{\theta sj}\|a_j\|_{\mathcal B}
 \right\|_{\ell^p_j}.
\]
This proves the proposition when $s>0$.

It remains to consider $s<0$. Define the index-reversal operator
$R$ by
\[
 (Ra)_j:=a_{-j}.
\]
For every $1\le q\le\infty$, the operator $R$ is an isometric
isomorphism on $\ell^q(\mathcal B)$. Moreover,
\[
 \begin{aligned}
 \|Ra\|_{\dot\ell^{-s}_q(\mathcal B)}
 &=
 \left\|
   2^{-sj}\|a_{-j}\|_{\mathcal B}
 \right\|_{\ell^q_j}\\
 &=
 \left\|
   2^{sk}\|a_k\|_{\mathcal B}
 \right\|_{\ell^q_k}\\
 &=
 \|a\|_{\dot\ell^s_q(\mathcal B)},
 \end{aligned}
\]
where $k=-j$. Thus $R$ identifies the couple
\[
 \bigl(
   \dot\ell^0_{q_1}(\mathcal B),
   \dot\ell^s_{q_2}(\mathcal B)
 \bigr)
\]
isometrically with
\[
 \bigl(
   \dot\ell^0_{q_1}(\mathcal B),
   \dot\ell^{-s}_{q_2}(\mathcal B)
 \bigr),
\]
where $-s>0$. Applying the already proved positive-exponent case
gives
\[
 \bigl(
   \dot\ell^0_{q_1}(\mathcal B),
   \dot\ell^s_{q_2}(\mathcal B)
 \bigr)_{\theta,p}
 =
 \dot\ell^{\theta s}_p(\mathcal B)
\]
with equivalent norms. This completes the proof.
\end{proof}

From \eqref{equivBesov} and Proposition \ref{prop:weighted-sequence-interpolation} we obtain the following.

\begin{cor}[Real interpolation]\label{trealint}
Let $-1\le s<0$, $1\le r_1\le\infty$, and $0<\theta<1$. Then
\begin{equation}\label{eq:complex-int}
\begin{aligned}
& (\,L^2,\ \mathcal F^{-1}\dot B^{s}_{2,r_1}\,)_{\theta,2}
 \;=\; \mathcal{F}^{-1} (\dot B^{\,\theta s}_{2,2})\\
 &\;=\; \mathcal{F}^{-1}\dot H^{\theta s}
 \;=\; \{ f : |\xi|^{\theta s} f(\xi) \in L^2_\xi \}
\end{aligned}
\end{equation}
with equivalent norms.
\end{cor}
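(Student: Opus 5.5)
The idea is to transport the problem to weighted sequence spaces through the dyadic identification \eqref{equivBesov}, interpolate there with Proposition \ref{prop:weighted-sequence-interpolation}, and transport back. First I set $\mathcal B=L^2$ on the annuli. Concretely, I use the map $G\mapsto (G\mathds 1_{A_j})_{j\in\mathbb Z}$, where $A_j$ now denote the Fourier-side annuli in $\xi$.

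For $L^2$ itself, $\|G\|_{L^2}^2=\sum_j\|G\|_{L^2(A_j)}^2$, because the annuli partition $\R^2\setminus\{0\}$ and $\{0\}$ is a null set. Hence $L^2$ is identified isometrically with $\dot\ell^0_2(L^2)$. By \eqref{equivBesov}, $\F^{-1}\dot B^s_{2,r_1}$ is identified with $\dot\ell^s_{r_1}(L^2)$ up to equivalent norms.

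To make this precise I realize both spaces as retracts of the sequence couple. The coretraction is $\mathcal J G=(G\mathds 1_{A_j})_j$, and the retraction is $\mathcal P(a)=\sum_j \mathds 1_{A_j}a_j$. Then $\mathcal P\mathcal J=I$, and both maps are bounded on each endpoint, with norms controlled by \eqref{equivBesov}. Since real interpolation commutes with retracts, the interpolation space is $\mathcal P\bigl((\dot\ell^0_2(L^2),\dot\ell^s_{r_1}(L^2))_{\theta,2}\bigr)$. Its norm is equivalent to $\|(\dot\ell^0_2,\dot\ell^s_{r_1})_{\theta,2}\text{-norm of }\mathcal J G\|$.

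Next I apply Proposition \ref{prop:weighted-sequence-interpolation} with $q_1=2$, $q_2=r_1$, $p=2$. This requires $s\ne0$, which holds since $s<0$. It gives $(\dot\ell^0_2(L^2),\dot\ell^s_{r_1}(L^2))_{\theta,2}=\dot\ell^{\theta s}_2(L^2)$. Pulling back through $\mathcal J$ and using \eqref{equivBesov} once more, now with exponent $\theta s$ and $r=2$, the interpolation space is $\F^{-1}\dot B^{\theta s}_{2,2}$.

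Finally, for the last equalities I compare $2^{j\theta s}$ with $|\xi|^{\theta s}$ on $A_j$. These agree up to a factor $2^{|\theta s|}$, so
\[
\sum_j 2^{2j\theta s}\|G\|_{L^2(A_j)}^2\sim\bigl\||\xi|^{\theta s}G\bigr\|_{L^2_\xi}^2 .
\]
This identifies the space with $\F^{-1}\dot H^{\theta s}$ and with the set $\{f:|\xi|^{\theta s}f\in L^2_\xi\}$. The restriction $-1\le s$ plays no role beyond the paper's convention.

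The main obstacle I expect is the compatibility issue: the homogeneous space is defined only modulo distributions supported at $\xi=0$. I would handle this by working with functions in $L^2_{loc}(\R^2\setminus\{0\})$, where the sum $L^2+\F^{-1}\dot B^s_{2,r_1}$ embeds in $L^1_{loc}$ away from the origin. I would also check that $\mathcal P$ maps $\dot\ell^s_{r_1}(L^2)$ into $\F^{-1}\dot B^s_{2,r_1}$ without losing the finite-overlap constants. This is exactly the content of the two-sided bound between $\mathds 1_{A_j}$ and $\sum_{|j-k|\le N}\varphi_k$ stated above.
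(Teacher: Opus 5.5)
Your proposal is correct and takes the same route as the paper, which obtains the corollary directly from the dyadic identification \eqref{equivBesov} together with Proposition \ref{prop:weighted-sequence-interpolation} (with $q_1=2$, $q_2=r_1$, $p=2$). Your coretraction/retraction pair $\mathcal J G=(G\mathds{1}_{A_j})_j$, $\mathcal P a=\sum_j\mathds{1}_{A_j}a_j$ makes explicit the transfer between the function couple and the sequence couple that the paper leaves implicit, and your comparison of $2^{j\theta s}$ with $|\xi|^{\theta s}$ on $A_j$ correctly gives the last two equalities.
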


\section{Uniform pseudoconformal bounds}
The mass and energy are conserved:
\begin{equation}
 \|u(t)\|_2^2=\|u_0\|_2^2,
 \qquad E_u(t)=E_u(0),
\end{equation}
where
\begin{equation}\label{energydef}
 E_u(t)=\frac14\int|\nabla u|^2dx+\frac13\int|u|^3dx.
\end{equation}
The pseudoconformal method appears in \cite{GV1980,TY84,W85,CW92}. Define
\begin{equation}\label{eq:EJ-def}
 \mathcal E_u^{\rm conf}(t)
 =\frac14\int|J(t)u|^2dx+\frac{t^2}{3}\int|u|^3dx,
 \qquad J(t)=x+it\nabla.
\end{equation}
For \eqref{NLSmm},
\begin{equation}\label{pcid}
 \frac{d}{dt}\mathcal E_u^{\rm conf}(t)
 =\frac{t}{3}\int|u(t,x)|^3dx.
\end{equation}
\begin{lem}\label{uniform1}
For $u_0\in\Sigma$,
\[
 \mathcal E_u^{\rm conf}(t)\le Ct,\qquad t\ge1.
\]
\end{lem}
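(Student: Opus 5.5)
The plan is to integrate the pseudoconformal identity \eqref{pcid} and absorb its right-hand side into the left-hand side. Since \(\mathcal E_u^{\rm conf}\) contains the term \(\frac{t^2}{3}\int|u|^3dx\), we have
\[
 \frac{t}{3}\int|u(t)|^3dx=\frac1t\cdot\frac{t^2}{3}\int|u(t)|^3dx\le \frac1t\,\mathcal E_u^{\rm conf}(t),
\]
because the \(J\)-term is nonnegative. Hence \eqref{pcid} gives the differential inequality \(\frac{d}{dt}\mathcal E_u^{\rm conf}(t)\le t^{-1}\mathcal E_u^{\rm conf}(t)\) for \(t\ge1\). Equivalently, \(\frac{d}{dt}\bigl(t^{-1}\mathcal E_u^{\rm conf}(t)\bigr)\le0\). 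Gronwall, or simply this monotonicity, then yields \(\mathcal E_u^{\rm conf}(t)\le t\,\mathcal E_u^{\rm conf}(1)\) for \(t\ge1\), so we may take \(C=\mathcal E_u^{\rm conf}(1)\).

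It remains to check that \(\mathcal E_u^{\rm conf}(1)<\infty\). We have \(\mathcal E_u^{\rm conf}(1)=\frac14\|(x+i\nabla)u(1)\|_2^2+\frac13\|u(1)\|_3^3\). The cubic term is bounded by the Gagliardo--Nirenberg inequality together with mass and energy conservation, since \(\|u(1)\|_3^3\le 3E_u(0)\). For the \(J\)-term, \(\|J(1)u(1)\|_2\le\|xu(1)\|_2+\|\nabla u(1)\|_2\), and the second summand is bounded by the energy. The weighted term \(\|xu(1)\|_2\) needs persistence of the \(\Sigma\) regularity on \([0,1]\). One route is to integrate \eqref{pcid} on \([0,1]\): at \(t=0\) we have \(\mathcal E_u^{\rm conf}(0)=\frac14\|xu_0\|_2^2<\infty\) because \(u_0\in\Sigma\), and \(\frac{d}{dt}\mathcal E_u^{\rm conf}=\frac t3\|u\|_3^3\le E_u(0)\,t\), so
\[
 \mathcal E_u^{\rm conf}(1)\le \tfrac14\|xu_0\|_2^2+\tfrac12E_u(0).
\]

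The main obstacle is justifying \eqref{pcid} rigorously for \(\Sigma\) solutions, that is, knowing that \(u\in C(\R;\Sigma)\) so that \(\mathcal E_u^{\rm conf}\) is finite and absolutely continuous. The first thing to try is the standard argument: derive the identity for smooth, rapidly decaying data, or use a regularized weight \(x e^{-\varepsilon|x|^2}\), and pass to the limit using continuous dependence in \(\Sigma\) for the \(H^1\)-subcritical nonlinearity \(|u|u\), as in \cite{C03} and the cited pseudoconformal literature \cite{GV1980,TY84,W85,CW92}. The differential inequality is linear with a nonnegative integrating factor, so it survives the limit once the identity holds in integrated form on \([1,t]\).
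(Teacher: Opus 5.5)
Your argument is correct and is the paper's proof. You bound $\frac t3\int|u|^3dx$ by $t^{-1}\mathcal E_u^{\rm conf}(t)$ using the nonnegativity of the $J$-term, then integrate $\frac{d}{dt}\mathcal E_u^{\rm conf}\le t^{-1}\mathcal E_u^{\rm conf}$ from $1$ to $t$; your checks that $\mathcal E_u^{\rm conf}(1)<\infty$ (by integrating \eqref{pcid} on $[0,1]$) and that \eqref{pcid} holds for $\Sigma$ solutions are details the paper leaves implicit.
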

\begin{proof}
From \eqref{eq:EJ-def},
\[
 \int|u|^3dx\le\frac3{t^2}\mathcal E_u^{\rm conf}(t).
\]
Substitution into \eqref{pcid} gives
\[
 \frac{d}{dt}\mathcal E_u^{\rm conf}(t)
 \le\frac1t\mathcal E_u^{\rm conf}(t).
\]
Integrating from $1$ to $t$ yields the result.
\end{proof}
\begin{cor}\label{cor:kinPot}
If $u_0\in\Sigma\setminus\{0\}$, then
\[
 \int_{\R^2}|u(t,x)|^3dx\lesssim t^{-1},
 \qquad
 \lim_{t\to\infty}\frac14\|\nabla u(t)\|_2^2=E_u(0)>0.
\]
\end{cor}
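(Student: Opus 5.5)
The plan is to combine Lemma \ref{uniform1} with energy conservation and the standard dispersive decomposition of $\|\nabla u\|_2$. The first claim is immediate. Since $\tfrac14\|J(t)u\|_2^2\ge0$, the definition \eqref{eq:EJ-def} gives
\[
\frac{t^2}{3}\int|u(t)|^3dx\le \mathcal E_u^{\rm conf}(t)\le Ct \qquad (t\ge1),
\]
so $\int|u(t)|^3dx\le 3Ct^{-1}$.

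For the second claim, energy conservation gives
\[
\tfrac14\|\nabla u(t)\|_2^2=E_u(0)-\tfrac13\int|u(t)|^3dx .
\]
The potential term tends to zero by the first claim, so $\tfrac14\|\nabla u(t)\|_2^2\to E_u(0)$. Positivity of $E_u(0)$ follows from \eqref{energydef}: both terms are nonnegative. If $E_u(0)=0$, then $\int|u_0|^3=0$, so $u_0=0$ a.e., contradicting $u_0\neq0$. (One may also note that $\nabla u_0=0$ with $u_0\in L^2$ forces $u_0=0$.) Hence $E_u(0)>0$.

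The only point needing care is that Lemma \ref{uniform1} requires $\mathcal E^{\rm conf}_u(1)<\infty$ and enough regularity to justify \eqref{pcid}. Both follow from $u_0\in\Sigma$, since $\Sigma$ is preserved by the flow: $Ju\in L^2$ and $u\in H^1\subset L^3$. I take this as given from the earlier sections.

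If a rate were wanted, one would use $\|J(t)u\|_2\lesssim t^{1/2}$ to get
\[
\|\nabla(e^{-i|x|^2/2t}u)\|_2\lesssim t^{-1/2}.
\]
The statement only asserts the limit, so the argument above suffices, and I expect no real obstacle.
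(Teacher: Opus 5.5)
Your proof is correct and matches the paper's argument: the decay of $\int|u|^3$ comes from dropping the nonnegative $\frac14\|J(t)u\|_2^2$ term in Lemma \ref{uniform1}, and the limit comes from energy conservation together with that decay. Your explicit check that $E_u(0)>0$ is also correct; the paper leaves this step implicit.
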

\begin{proof}
The first assertion follows from $t^2\int|u|^3\lesssim t$. The second follows from energy conservation and the decay of the potential energy.
\end{proof}

\section{Radiality and the degree-one spherical-harmonic sector}

\begin{prop}\label{prop:radiality}
    Let \(n\ge2\), \(v\in\mathbb R^n\), and let $g:(0,\infty)  \mapsto \mathbb C$ be a function such that \footnote{here $g \in L^2((0,\infty);r^{k-1}dr)$, $k \in \mathbb N,$ means $\int_0^\infty |g(r)|^2 r^{k-1} dr < \infty.$ In this case $g(|x|)$, $x \in \mathbb R^k$ is a radial function in $L^2(\mathbb R^k).$} \(g\in L^2((0,\infty);r^{n+1}\,dr)\). Define
    \[ f(x)=(x\cdot v)g(|x|),\qquad x\in\mathbb R^n. \]
    Then \(f\in L^2(\mathbb R^n)\). Moreover, for every bounded Borel function \(\phi\),
    \begin{equation}\label{eq.idD0}
 \phi(-\Delta_{\mathbb R^n})f(x) = (x\cdot v)\, \phi(-\Delta_{\mathrm{rad}}^{(n+2)})g(|x|)
\end{equation}
    in \(L^2(\mathbb R^n)\), where
    where \(-\Delta_{\mathrm{rad}}^{(n+2)}\) denotes the nonnegative self-adjoint radial Laplacian in dimension \(n+2\), formally given by \[ \Delta_{\mathrm{rad}}^{(n+2)}g(r) = g''(r)+\frac{n+1}{r}g'(r). \]
\end{prop}

\begin{proof} First,
\[ \|f\|_{L^2(\mathbb R^n)}^2 = \int_0^\infty |g(r)|^2 r^{n+1}\,dr \int_{\mathbb S^{n-1}} |\omega\cdot v|^2\,d\omega. \] Since
\[ \int_{\mathbb S^{n-1}} |\omega\cdot v|^2\,d\omega = \frac{|\mathbb S^{n-1}|}{n}|v|^2, \]
we have \(f\in L^2(\mathbb R^n)\). If \(v=0\), the result is trivial.
Assume \(v\neq0\), and put \(e=v/|v|\). Let
\[ Y_e(\omega) = \left(\frac{n}{|\mathbb S^{n-1}|}\right)^{1/2}\omega\cdot e. \]
Then \(Y_e\) is a normalized spherical harmonic of degree one and
\[ \Delta_{\mathbb S^{n-1}}Y_e=-(n-1)Y_e. \]
Define
\[ U:L^2((0,\infty);r^{n+1}\,dr)\to L^2(\mathbb R^n) \]
by
\[ (Ug)(r\omega)=r g(r)Y_e(\omega). \]
Then \(U\) is unitary from \(L^2((0,\infty);r^{n+1}\,dr)\) onto the closed subspace
\[ \mathcal H_e = \{h(r)Y_e(\omega): h(r)=r g(r),\ g\in L^2((0,\infty);r^{n+1}\,dr)\}. \]
For smooth and compactly supported \(g\), write \(h(r)=r g(r)\). Since
\[ \Delta_{\mathbb R^n}(hY_e) = \left( h''+\frac{n-1}{r}h' -\frac{n-1}{r^2}h \right)Y_e, \]
we obtain \[ h'=g+rg', \qquad h''=2g'+rg''. \]
Therefore
\[ h''+\frac{n-1}{r}h'-\frac{n-1}{r^2}h = rg''+(n+1)g'. \]
Hence
\[ \Delta_{\mathbb R^n}(Ug) = r\left(g''+\frac{n+1}{r}g'\right)Y_e = U\bigl(\Delta_{\mathrm{rad}}^{(n+2)}g\bigr). \] Equivalently,
\[ -\Delta_{\mathbb R^n}Ug = U\bigl(-\Delta_{\mathrm{rad}}^{(n+2)}g\bigr) \]
on \(C_c^\infty(0,\infty)\). The degree-one spherical-harmonic sector is reducing for \(-\Delta_{\mathbb R^n}\). Since the above intertwining holds on a core for \(-\Delta_{\mathrm{rad}}^{(n+2)}\), the restriction of \(-\Delta_{\mathbb R^n}\) to \(\mathcal H_e\) is unitarily equivalent to \(-\Delta_{\mathrm{rad}}^{(n+2)}\). Thus, by the spectral theorem, for every bounded Borel function \(\phi\),
\[ \phi(-\Delta_{\mathbb R^n})Ug = U\phi(-\Delta_{\mathrm{rad}}^{(n+2)})g. \]
Finally,
\[ f(x) = (x\cdot v)g(|x|) = |v|\left(\frac{|\mathbb S^{n-1}|}{n}\right)^{1/2}Ug(x). \]
Multiplying the previous identity by this constant gives
\[ \phi(-\Delta_{\mathbb R^n})f(x) = (x\cdot v) \bigl(\phi(-\Delta_{\mathrm{rad}}^{(n+2)})g\bigr)(|x|) \] in \(L^2(\mathbb R^n)\).
\end{proof}

\begin{cor}\label{cor:propagator-sector}
Under the assumptions of Proposition \ref{prop:radiality},
\begin{equation}\label{eq.idD1}
 e^{it\Delta/(2m)}f(x)
 =(x\cdot v)e^{it\Delta^{(n+2)}/(2m)}g(|x|).
\end{equation}
\end{cor}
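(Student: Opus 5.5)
This is a direct application of Proposition \ref{prop:radiality} with a bounded Borel function of $-\Delta$. The free propagator is
\[
e^{it\Delta/(2m)}=\phi_t(-\Delta_{\R^n}),\qquad \phi_t(\lambda):=e^{-it\lambda/(2m)}.
\]
For each fixed real $t$, the function $\phi_t$ is continuous on $[0,\infty)$ and satisfies $|\phi_t|\equiv1$, so it is a bounded Borel function. The plan is to plug $\phi=\phi_t$ into \eqref{eq.idD0}. This immediately gives
\[
e^{it\Delta/(2m)}f(x)=(x\cdot v)\,\phi_t\bigl(-\Delta_{\mathrm{rad}}^{(n+2)}\bigr)g(|x|).
\]

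The only remaining point is to identify $\phi_t(-\Delta_{\mathrm{rad}}^{(n+2)})$ with $e^{it\Delta^{(n+2)}/(2m)}$ acting on radial functions of $\R^{n+2}$. I would do this as follows.

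\begin{itemize}
\item Let $\Delta^{(n+2)}$ denote the Laplacian of $\R^{n+2}$. The radial subspace of $L^2(\R^{n+2})$ is reducing for it.
\item This subspace is unitarily identified with $L^2((0,\infty);r^{n+1}dr)$ via $g\mapsto g(|y|)$, $y\in\R^{n+2}$. This is exactly the normalization in the footnote, with $k=n+2$.
\item Under this identification, the restriction of $-\Delta^{(n+2)}$ is the self-adjoint operator $-\Delta^{(n+2)}_{\mathrm{rad}}$.
\item The functional calculus commutes with restriction to reducing subspaces. Hence
\[
\phi_t\bigl(-\Delta^{(n+2)}_{\mathrm{rad}}\bigr)g=\bigl(e^{it\Delta^{(n+2)}/(2m)}g(|\cdot|)\bigr)\big|_{\text{radial profile}}.
\]
\end{itemize}

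This is precisely the meaning of the right-hand side of \eqref{eq.idD1}. The identity therefore holds in $L^2(\R^n)$ for every $t\in\R$ and every $m\neq0$.

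The main obstacle is interpretive rather than analytic: making sure that "$e^{it\Delta^{(n+2)}/(2m)}g(|x|)$" is read as the radial profile of the $(n+2)$-dimensional free evolution, evaluated at radius $|x|$. Once that convention is fixed, no estimates are needed. Optionally, as a sanity check, one can verify the identity for Gaussians $g(r)=e^{-ar^2}$ with $\operatorname{Re}a>0$. For such $g$, $(x\cdot v)e^{-a|x|^2}$ is a derivative in $v$ of an $n$-dimensional Gaussian, and the explicit heat/Schr\"odinger kernel reproduces the $(n+2)$-dimensional radial evolution. Density of such Gaussians (and their $a$-derivatives) in $L^2((0,\infty);r^{n+1}dr)$, together with unitarity of both sides, then gives a second proof.
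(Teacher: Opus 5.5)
Your proposal is correct and matches the paper's proof, which simply applies Proposition~\ref{prop:radiality} with the bounded Borel function $\phi(\lambda)=e^{-it\lambda/(2m)}$. Your extra step, identifying $\phi(-\Delta^{(n+2)}_{\mathrm{rad}})$ with the radial restriction of the $(n+2)$-dimensional propagator, spells out an interpretation the paper leaves implicit; the Gaussian sanity check is not needed.
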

\begin{proof}
Apply Proposition \ref{prop:radiality} to the bounded Borel function $\phi(\lambda)=e^{-it\lambda/(2m)}$.
\end{proof}

\begin{cor}\label{corD2}
Let $g:(0,\infty) \ni  r \to \mathbb C$ be a function such that  $rg\in L^2_{\rm rad}(\R^n)$. For $j=1,\ldots,n$,
\[
 e^{it\Delta/(2m)}[x_jg(|x|)]
 =x_j e^{it\Delta^{(n+2)}/(2m)}g(|x|).
\]
\end{cor}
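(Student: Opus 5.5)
The plan is to derive Corollary \ref{corD2} directly from Corollary \ref{cor:propagator-sector}, by choosing the vector $v$ to be a standard basis vector. Fix $j\in\{1,\ldots,n\}$ and let $v=e_j$, so that $x\cdot v=x_j$ and $f(x)=x_jg(|x|)$. Then \eqref{eq.idD1} gives exactly the claimed identity
\[
 e^{it\Delta/(2m)}[x_jg(|x|)]=x_j\,e^{it\Delta^{(n+2)}/(2m)}g(|x|).
\]
So the only work is to check that the hypothesis $rg\in L^2_{\rm rad}(\R^n)$ is the same as the hypothesis $g\in L^2((0,\infty);r^{n+1}\,dr)$ required in Proposition \ref{prop:radiality}.

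To check this, I compute in polar coordinates:
\[
 \|\,|x|g(|x|)\,\|_{L^2(\R^n)}^2=|\mathbb S^{n-1}|\int_0^\infty r^2|g(r)|^2r^{n-1}\,dr=|\mathbb S^{n-1}|\int_0^\infty|g(r)|^2r^{n+1}\,dr.
\]
Hence $rg\in L^2_{\rm rad}(\R^n)$ holds if and only if $g\in L^2((0,\infty);r^{n+1}dr)$, and the proposition applies. This also confirms that $x_jg(|x|)\in L^2(\R^n)$, since $|x_j|\le|x|$.

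The one point to state carefully is how the right-hand side is read. The operator $e^{it\Delta^{(n+2)}/(2m)}$ acts on $g$ as a radial function on $\R^{n+2}$, that is, on $L^2((0,\infty);r^{n+1}dr)$. The result $\tilde g(r)$ is then evaluated at $r=|x|$ with $x\in\R^n$, and the identity holds in $L^2(\R^n)$, exactly as in Proposition \ref{prop:radiality}. Beyond making this interpretation explicit, there is no real obstacle: the corollary is just Corollary \ref{cor:propagator-sector} specialised to coordinate directions.
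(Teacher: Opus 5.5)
Your proposal is correct and is exactly the argument the paper intends: Corollary \ref{corD2} is stated without a separate proof, as the specialization $v=e_j$ of Corollary \ref{cor:propagator-sector}. Your polar-coordinate check that $rg\in L^2_{\rm rad}(\R^n)$ is equivalent to $g\in L^2((0,\infty);r^{n+1}\,dr)$ is the only hypothesis verification needed, with $n\ge2$ implicitly inherited from Proposition \ref{prop:radiality}.
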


\section*{Acknowledgements}
The authors are grateful to N. Hayashi for discussions during the preparation of the manuscript.
The first author was supported in part by
  Gruppo Nazionale per l'Analisi Matematica,  by Institute of Mathematics and Informatics, Bulgarian Academy of Sciences and by  Waseda University. The second author was supported by JSPS KAKENHI Grant Number 24H00024 and JST Moonshot R \& D Grant number GMPJMS25A5.

\bibliographystyle{amsplain}
\bibliography{MWN_OG}
\end{document}